\newtheorem{theorem}{Theorem}[section]
\newtheorem{definition}{Definition}[section]
\newtheorem{lemma}{Lemma}[section]
\newtheorem{remark}{Remark}[section]
\newtheorem{algorithm}{Algorithm}[section]
\theoremstyle{definition}
\DeclareMathOperator*{\argmin}{arg\,min}
\DeclareMathOperator*{\Argmin}{Arg\,min}
\DeclareMathOperator*{\Max}{Max}
\begin{document}
\title{\textbf{Contractive difference-of-convex algorithms}}
\author{ \sc Songnian He$^{a}${\thanks{ email: songnianhe@163.com}},\,\,
Qiao-Li Dong$^{a}${\thanks{Corresponding author. email: dongql@lsec.cc.ac.cn}}\,\,\,and\,
Michael Th. Rassias$^{b,c}${\thanks{email: mthrassias@yahoo.com}}\\
\small $^{a}$Tianjin Key Laboratory for Advanced Signal Processing and College of Science, \\
\small Civil Aviation University of China, Tianjin 300300, China,\\
\small $^{b}$Department of Mathematics and Engineering Sciences,
Hellenic Military Academy, \\
\small 16673 Vari Attikis, Greece\\
\small $^{c}$Institute for Advanced Study, Program in Interdisciplinary Studies, 1 Einstein Dr, \\
\small Princeton, NJ 08540, USA.
}

\date{}

\maketitle

\begin{abstract}

{The difference-of-convex algorithm (DCA) and its variants are the most popular methods to solve the difference-of-convex  optimization problem. Each iteration of them is reduced to  a convex optimization problem, which generally needs  to be solved by iterative methods such as proximal gradient algorithm. However, these  algorithms essentially belong to some iterative methods of fixed point problems of averaged  mappings, and their convergence speed is generally slow. Furthermore, there is seldom research on the  termination rule of these iterative algorithms solving the subproblem of DCA. To overcome these defects, we firstly show that the subproblem of the linearized proximal method (LPM) in each iteration is equal to the fixed point problem of a contraction. Secondly, by using Picard iteration  to approximately solve the subproblem of LPM in each iteration, we propose a contractive difference-of-convex  algorithm (cDCA) where    an adaptive termination rule is presented.} Both global subsequential convergence and  global convergence of the whole sequence of cDCA are established.  Finally, preliminary results from numerical experiments
are promising.
\end{abstract}

\vskip 2mm

\noindent{\bf Keywords:} Linearized proximal method;  Contractive difference-of-convex algorithm;  Kurdyka–-Łojasiewicz property; Difference-of-convex optimization problem.

\noindent{\bf AMS Subject Classification:} 90C30, 65K05, 90C26.
\date{}

\section{ Introduction }

The difference-of-convex (DC) optimization problem   refers to {minimizing} the difference of two convex
functions, which forms a large class of nonconvex optimization and global optimization.  It models real world  optimization problems
 \cite{AT}, such as digital communication system \cite{Alvarado}, compressed sensing \cite{Yin-Lou-He-Xin}, machine learning, data mining \cite{LeThi},
location, distance geometry and clustering \cite{Tuy}, and hyperparameter selection \cite{Ye}. Interested
readers may consult the review articles \cite{dO,Pham-Le2,{Le-Pham}}.

In this paper we focus on a class of  DC optimization problem as follows:
\begin{equation}
\label{DC}
\min_{x\in \mathbb{R}^n }F(x):=f(x)+g(x)-h(x),
\end{equation}
{where  $f: \mathbb{R}^n \rightarrow \mathbb{R}$ is a smooth convex function with a Lipschitz continuous gradient whose Lipschitz continuity constant is $L_f > 0,$
 $g: \mathbb{R}^n\rightarrow (-\infty,+\infty]$ is a  proper,  convex, lower semi-continuous  and prox friendly function and
 $h: \mathbb{R}^n\rightarrow \mathbb{R}$ is a convex function.
We assume that $F$ is level-bounded, i.e., for each $r\in \mathbb{R}$, the set $\{x \in \mathbb{R}^n:
F(x)\leq r\}$ is bounded.
Denote by $\mathcal{S}$  the solution set of the problem (\ref{DC}). It is easy to see that $\mathcal{S}\neq \emptyset$ under the above assumption on the objective.}
The problem \eqref{DC} has applications in sparse learning and compressed
sensing, where  $f$ is a loss function representing data fidelity, while $g-h$
is a regularizer for inducing desirable structures (for example, sparsity) in the solution (see, e.g. \cite{Liu-Pong-Takeda,{Lou-Yan}} for details).

{It is well-known that the difference-of-convex algorithm (DCA)  and its various variants were designed to solve the problem \eqref{DC}.} Given the current iterate $x^k,$ DCA generates the next one by solving the convex optimization problem
\begin{equation}
\label{DCA}
x^{k+1}{\in\Argmin_{x\in \mathbb{R}^n}}\left\{f(x)+g(x)-\langle\eta^k, x\rangle\right\},\\
\end{equation}
where $\eta^k\in \partial h(x^k)$ is taken arbitrarily.
It has received a great deal of attention of
researchers, who improved it in various ways; see, e.g., \cite{AV,Pham-Souad,{Pham-Le},{Pham-Le1},{Banert-Bot},{Liu-Pong-Takeda1}}  and references therein.
By exploring the structure of the sum of $f$ and $g$, the proximal DCA (pDCA) was recently introduced for the problem \eqref{DC} (e.g., see \cite{Gotoh}). Given the current iterate $x^k,$ the new iterate is obtained by solving the proximal subproblem
\begin{equation}
\label{pDCA}
x^{k+1}=\argmin_{x\in \mathbb{R}^n}\left\{\langle \nabla f(x^k)-\eta^k,x\rangle
+\frac{L_f}2\|x-x^k\|^2+g(x)\right\}
\end{equation}
where $\eta^k\in \partial h(x^k)$ is taken arbitrarily.
To speed up the pDCA by  Nesterov's extrapolation technique \cite{Nesterov}, Wen et al. \cite{Wen} recently proposed a proximal DCA with extrapolation (pDCA$_{e}$)
\begin{equation}
\label{pDCAe}
\left\{
\aligned
&y^k=x^k+\beta_k(x^k-x^{k-1}),\\
&x^{k+1}=\argmin_{x\in \mathbb{R}^n}\left\{\langle \nabla f(y^k)-\eta^k,x\rangle
+\frac{L_f}2\|x-y^k\|^2+g(x)\right\},\\
\endaligned
\right.
\end{equation}
where $\eta^k\in \partial h(x^k)$ is taken arbitrarily and the extrapolation parameters $\{\beta_k\}_{k=0}^\infty\subseteq[0,1)$ satisfy $\sup_k\beta<1.$ Meanwhile,
Phan et al. \cite{PhanLe} introduced an accelerated DCA (ADCA)
\begin{equation}
\label{ADCA}
\left\{
\aligned
&\hbox{Choose}\,x^0\in \mathbb{R}^n,\,z^0=x^0,\,q\in \mathbb{N},\,\rho>L_f,\\
&z^k=x^k+\beta_k(x^k-x^{k-1}),\,\,\hbox{if}\,\,k\ge1,\\
&v^k=
\begin{cases}
z^k,&\hbox{if}\,\, F(z^k)\le\max_{t=\max{(0,k-q)},...,k}F(x^t),\\
x^k,&\hbox{otherwise},\\
\end{cases}\\
&y^k=\rho v^k-\nabla f(v^k)+\eta^k,\,\,\hbox{where}\,\,\eta^k\in \partial h(v^k),\\
&x^{k+1}=\argmin_{x\in  \mathbb{R}^n}\frac\rho2\|x\|^2+g(x)-\langle y^k,x\rangle,
\endaligned
\right.
\end{equation}
where the extrapolation parameters $\{\beta_k\}_{k=0}^\infty$  are taken as in FISTA \cite{BT}.
The choice of $v^k$ allows the objective function $F$ to increase and consequently to escape from a
potential bad local minimum. Further progress on {pDCA} can be founded in \cite{Liu-Pong-Takeda,LL,{Lu-Zhou},{Lu-Zhou-Sun},Pang-Razaviyayn-Alvarado}.

Recently, some  methods were introduced to solve more general   DC \eqref{DC}. 
Yu et al. \cite{Yu-Pong-Lu2021} proposed a sequential convex programming method with line search, which doesn't need  $f$  to be convex.
A backward-Douglas--Rachford method was presented in \cite{Pham} to solve DC problem where $f$ and $g$ {do not} require  to be convex. The functions $g$ in \cite{Yu-Pong-Lu2021} and $f$, $g$ and $h$ in \cite{Pham}  are assumed to be prox friendly. Syrtseva et al. \cite{Syrtseva} introduced a proximal bundle method which only needs $[h(x)-f(x)]$ to be weakly convex.
It solves a strongly convex QP per iteration and {does not} need the proximal operators of the functions.

{One common feature of DCA and its variants is that each iteration is reduced to a convex optimization problem  generally solving by iterative algorithms.  However, these  algorithms essentially belong to some iterative methods of fixed points of averaged  mappings, and consequently their convergence speed is generally slow.}
{For example, the subproblem \eqref{DCA} is convex and generally does not have a closed-form solution. Therefore, one needs to solve it by using iterative algorithms, such as  proximal gradient methods. In fact, by the first-order optimality condition, it is easy to write \eqref{DCA} as follows:
\begin{equation}
\label{averaged}
x^{k+1}={\rm Prox}_{\frac1{L_f}g}\big[x^{k+1}-\frac1{L_f}\nabla f(x^{k+1})+\frac1{L_f}\eta^k\big].
\end{equation}
The operator corresponding to the implicit scheme \eqref{averaged} is ${\rm Prox}_{\frac1{L_f}g}\big[(I-\frac1{L_f}\nabla f)\cdot+\frac1{L_f}\eta^k)\big]$ which is $\frac34$-averaged (see \cite{XuHK}). Therefore $x^{k+1}$ in  \eqref{averaged} can be approximated  by Krasnosel'ski\v{\i}--Mann iteration whose convergence may be arbitrarily slow (see, e.g., \cite{Oblomskaja}). As far as we know, there are no results on iterative termination rule for Krasnosel'ski\v{\i}--Mann iteration approximating \eqref{averaged}.
\vskip 1mm

By imposing a regularizer in the subproblem of DCA \eqref{DCA}, Sun et al. \cite{Sun} presented 
the linearized proximal method (LPM) as follows:
\begin{equation}\label{eq:MDC}
x^{k+1}=\argmin_{x\in \mathbb{R}^n}\left\{f(x)+g(x)-\langle\eta^k, x\rangle+\frac{\lambda}{2}\|x-x^k\|^2\right\},
\end{equation}
where $\eta^k\in \partial {h}(x^k)$ and $\lambda$ is an arbitrary positive number. However, it is usually difficult to obtain the analytical solution of the subproblem of LPM in each iteration.
In this paper, we firstly show that the subproblem of LPM is  equal to the fixed point problem of a  contraction. Secondly, based on this result, we introduce a  contractive difference-of-convex  algorithm (cDCA), whose core is to    approximately solve the subproblem of LPM in each iteration by Picard iteration. An adaptive iterative termination rule is given. The global subsequential convergence and  global convergence of the whole sequence of cDCA  are shown under appropriate conditions.
\vskip 2mm

The paper is organized as follows. Section 2 reviews some preliminary
notions, definitions and lemmas which will be helpful for further analysis. In section 3, we present the
motivation of cDCA and  introduce cDCA.  Section 4   proves the global subsequential convergence and  global convergence of the whole sequence of cDCA.  Section 5 contains
preliminary results of numerical experiments. Finally, in Section 6, we give some conclusive remarks.
}
\section{ Preliminaries}

Throughout this paper, we use $\mathbb{R}^n$ to denote the $n$-dimensional Euclidean space with inner product $\langle \cdot, \cdot\rangle$ and Euclidean norm $\|\cdot\|$.
Denote by $I$ the identity operator from $\mathbb{R}^n$ into itself. We use $x^k\rightarrow x$ to indicate that the sequence $\{x^k\}_{k=0}^\infty$ converges to  $x$,  $\omega(x^k) =\{x\mid\exists \, \{x^{k_l}\}_{l=0}^\infty\subseteq\{x^k\}_{k=0}^\infty$ such that $ x^{k_l} \rightarrow x\}$ to denote the  $\omega$-limit set of $ \{x^k\}_{k=0}^\infty$ {and  ${\rm co}(\{x^k\}_{k=0}^\infty)$ to denote the convex hull of  $\{x^k\}_{k=0}^\infty$.}
\vskip 1mm

Now we recall some definitions of the mappings.   A mapping $T:\mathbb{R}^n\rightarrow \mathbb{R}^n$ is called $\beta$-cocoercive (also called $\beta$-inverse strongly monotone), if there exists a constant $ \beta > 0$, such that
$$
\langle T(x)-T(y),x-y \rangle \geq \beta \|T(x)-T(y)\|^2,\quad\forall x, y\in \mathbb{R}^n.
$$
{A mapping $T:\mathbb{R}^n\rightarrow \mathbb{R}^n$ is called firmly nonexpansive if $\beta=1.$}
A mapping $T:\mathbb{R}^n\rightarrow \mathbb{R}^n$ is called $L $-Lipschitz continuous, if there exists a constant $ L > 0$, such that
$$
\|T(x)-T(y)\|\leq  L\|x-y\|,\quad\forall x, y\in \mathbb{R}^n.
$$
Then {$T$ is called a $L $-contraction  if $L\in[0,1)$} and  nonexpansive  if $L=1$.
A mapping $T:\mathbb{R}^n\rightarrow \mathbb{R}^n$ is called $\alpha$-averaged ($0<\alpha<1$) if there exists a nonexpansive operator
$S$ such
that $T=(1-\alpha)I+\alpha S.$

\begin{lemma}\label{lem22.2} {\rm{(\cite[Proposition 3.4]{XuHK}})}
Let $T: \mathbb{R}^n\rightarrow \mathbb{R}^n$ be  $\beta$-cocoercive. Then for any $\nu\in (0, 2\beta]$, $I-\nu T$ is a nonexpansive mapping.
\end{lemma}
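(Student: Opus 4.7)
The plan is to verify the nonexpansiveness of $I-\nu T$ by a direct expansion of the squared norm, using cocoercivity to bound the cross term. Since this is a classical result with a one-line-of-algebra proof, I would not look for anything more sophisticated than that expansion.

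Concretely, I would fix arbitrary $x,y\in\mathbb{R}^n$ and compute
\begin{equation*}
\|(I-\nu T)x-(I-\nu T)y\|^2=\|x-y\|^2-2\nu\langle x-y,Tx-Ty\rangle+\nu^2\|Tx-Ty\|^2.
\end{equation*}
Applying the $\beta$-cocoercivity assumption $\langle Tx-Ty,x-y\rangle\geq\beta\|Tx-Ty\|^2$ to the middle term (with the minus sign reversing the inequality) yields
\begin{equation*}
\|(I-\nu T)x-(I-\nu T)y\|^2\leq\|x-y\|^2+\nu(\nu-2\beta)\|Tx-Ty\|^2.
\end{equation*}

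The final step is to observe that the hypothesis $\nu\in(0,2\beta]$ guarantees $\nu-2\beta\leq 0$, while $\nu>0$, so the coefficient $\nu(\nu-2\beta)$ is nonpositive. Dropping that nonpositive term gives $\|(I-\nu T)x-(I-\nu T)y\|^2\leq\|x-y\|^2$, i.e.\ $I-\nu T$ is nonexpansive.

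There is essentially no obstacle: the only subtlety worth a sanity check is making sure the endpoint $\nu=2\beta$ is genuinely allowed (it is, since the bracketed expression is then zero, not negative), and that the case $Tx=Ty$ is handled trivially by the same inequality. No case analysis or auxiliary lemmas are needed, and the argument works verbatim in any real Hilbert space.
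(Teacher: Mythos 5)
Your proof is correct and is exactly the standard one-line expansion argument; the paper itself gives no proof, citing \cite[Proposition 3.4]{XuHK}, whose proof is precisely this computation. Nothing further is needed.
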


Let $\varphi:\mathbb{R}^n\rightarrow (-\infty,+\infty]$ be a function.
 The domain of $\varphi$
is ${\rm dom}(\varphi)=\{x\in \mathbb{R}^n\,|\,\varphi(x)<+\infty\}$.
A function $\varphi$ is
called  proper if  ${\rm dom}(\varphi)\neq\emptyset$.
A function $\varphi$ is called {convex} if
$$
\varphi(\lambda u+(1-\lambda)v)\leq\lambda \varphi(u)+(1-\lambda)\varphi(v),~\forall\lambda\in[0,1],~\forall u,v\in \mathbb{R}^n.
$$
A function $\varphi:\mathbb{R}^n\rightarrow (-\infty,+\infty]$ is called  {lower semi-continuous}
($lsc$)  if for each $u\in \mathbb{R}^n$, $ u^{k}\rightarrow u$ implies
$$
 \varphi(u)\leq\liminf_{k\rightarrow\infty}\varphi(u^{k}).
$$

Let $\varphi:\mathbb{R}^n\rightarrow (-\infty,+\infty]$ be proper. The {subdifferential} of $\varphi$ at $u\in\mathbb{R}^n$ is
\begin{equation}\label{SI}
\partial\varphi(u)=\{\xi\in \mathbb{R}^n\,|\,\varphi(z)\geq \varphi(u)+\langle \xi,z-u\rangle,~~\forall z\in \mathbb{R}^n\}.
\end{equation}
Then $\varphi$ is called {subdifferentiable} at $u$ if $\partial\varphi(u)\neq\emptyset$. The inequality in \eqref{SI} is called the {subdifferential inequality} of $\varphi$ at $u$. In addition, if $\varphi$ is continuously differentiable,
then the subdifferential \eqref{SI} reduces to the gradient of $\varphi$, denoted by $\nabla\varphi$.

\begin{lemma}{\rm(\cite[Proposition 16.20]{BC2011})}\label{le2.22}
Every convex function defined on $\mathbb{R}^n$ with finite values is continuous and subdifferentiable, and its subdifferential operator is  bounded, i.e.,  bounded on bounded sets.
\end{lemma}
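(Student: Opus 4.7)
The plan is to prove the three assertions (continuity, subdifferentiability, local boundedness of the subdifferential) in sequence, leveraging the finite-valuedness of $\varphi$ at every stage; each conclusion will feed into the next.

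First, I would establish that $\varphi$ is locally bounded. For an upper bound near an arbitrary $u\in\mathbb{R}^n$, I would enclose $u$ in a closed simplex $\Delta=\mathrm{co}\{v_0,v_1,\dots,v_n\}$ whose interior contains $u$ (for instance, the shifted unit simplex around $u$). Every point $x$ in a small ball around $u$ can be written as a convex combination $x=\sum_i\lambda_i v_i$, and convexity gives $\varphi(x)\le\sum_i\lambda_i\varphi(v_i)\le M:=\max_i\varphi(v_i)$. For a lower bound, I would use the symmetric reflection trick: for $x=u+h$ close to $u$, convexity applied to the midpoint of $x$ and $u-h$ yields $\varphi(u)\le\tfrac12\varphi(u+h)+\tfrac12\varphi(u-h)$, so $\varphi(u+h)\ge 2\varphi(u)-\varphi(u-h)\ge 2\varphi(u)-M$. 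This sandwiches $\varphi$ on a neighborhood of $u$.

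Second, I would upgrade local boundedness to local Lipschitz continuity (in particular continuity). Given a ball $B(u,r)$ on which $|\varphi|\le M'$, for $x,y\in B(u,r/2)$ with $x\neq y$ I would extend beyond $y$ to the point $z=y+\frac{r}{2}\cdot\frac{y-x}{\|y-x\|}\in B(u,r)$, so that $y$ is a convex combination of $x$ and $z$; convexity then furnishes the standard bound $\varphi(y)-\varphi(x)\le\frac{2M'}{r/2}\cdot\|y-x\|$. Symmetry in $x,y$ gives local Lipschitz continuity, hence continuity. For subdifferentiability at $u$, I would apply the separation theorem to the convex set $\mathrm{epi}(\varphi)$ and the point $(u,\varphi(u)-\epsilon)\notin\mathrm{epi}(\varphi)$ (or the standard variant separating $(u,\varphi(u))$ from the interior of the epigraph); continuity of $\varphi$ ensures that the interior of the epigraph is nonempty and that the separating hyperplane is non-vertical, so it can be rewritten in the form $\varphi(z)\ge\varphi(u)+\langle\xi,z-u\rangle$, producing a subgradient $\xi\in\partial\varphi(u)$.

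Finally, I would bound $\partial\varphi$ on a bounded set $B\subset\mathbb{R}^n$. By the first step, pick $r>0$ and $M''<\infty$ such that $|\varphi|\le M''$ on the enlarged bounded set $B+\overline{B}(0,r)$. For any $u\in B$ and $\xi\in\partial\varphi(u)$ with $\xi\neq 0$, test the subdifferential inequality at $z=u+r\xi/\|\xi\|\in B+\overline{B}(0,r)$:
\begin{equation*}
\varphi(z)\ge\varphi(u)+\langle\xi,z-u\rangle=\varphi(u)+r\|\xi\|,
\end{equation*}
so $\|\xi\|\le(\varphi(z)-\varphi(u))/r\le 2M''/r$, which is independent of $u\in B$ and $\xi\in\partial\varphi(u)$.

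The step I expect to be most delicate is the passage from separation to a genuine (non-vertical) subgradient; it is at this point that continuity, proved in the preceding step, is essential, since a purely vertical supporting functional would not correspond to a subgradient. Everything else is routine once local boundedness is in hand.
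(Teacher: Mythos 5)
The paper offers no proof of this lemma at all: it is imported verbatim as \cite[Proposition~16.20]{BC2011}, so there is nothing in the text to compare your argument against line by line. What you have written is the standard self-contained textbook proof of that proposition, and it is correct: the simplex/reflection argument gives local boundedness, the three-point convexity estimate upgrades this to local Lipschitz continuity, the supporting-hyperplane argument on $\mathrm{epi}(\varphi)$ yields a subgradient at every point, and testing the subdifferential inequality at $z=u+r\xi/\|\xi\|$ gives the uniform bound $\|\xi\|\le 2M''/r$ on any bounded set. The one place where your attribution is slightly loose is the non-verticality of the supporting hyperplane: what actually rules out a vertical functional $(a,0)$ is that $u$ lies in the \emph{interior of the domain} (here $\mathrm{dom}(\varphi)=\mathbb{R}^n$), since a vertical support would force $\langle a,z-u\rangle\le 0$ for all $z$ in the domain and hence $a=0$; continuity is what you need for the epigraph to have nonempty interior so that a supporting hyperplane exists at the boundary point $(u,\varphi(u))$. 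You correctly identified this as the delicate step, and the argument goes through. Your proof is more informative than the paper's bare citation, at the cost of a page of routine convex analysis; for the purposes of this paper the citation is the appropriate choice, but nothing in your proposal is wrong.
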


\begin{lemma}{\rm(\cite[Theorem 24.4]{Rock})}
\label{le2.3}
Let  $\varphi:~\mathbb{R}^n\rightarrow (-\infty,+\infty]$ be a proper, lower semi-continuous and convex function. If $\{x^k\}_{k=0}^\infty$ and $\{\xi^k\}_{k=0}^\infty$ are sequences such that  $\xi^k\in \partial \varphi (x^k)$, where   $x^k\rightarrow x$ and $\xi^k\rightarrow \xi$ as $k\rightarrow \infty$, then $\xi\in \partial \varphi (x).$
\end{lemma}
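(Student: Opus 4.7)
The plan is to prove the closedness of the graph of $\partial\varphi$ by passing to the limit in the subdifferential inequality, with the subtle point that lower semi-continuity alone is not enough; I also need that $\varphi(x^k)\to\varphi(x)$, which has to be squeezed out of the hypotheses.

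First I would write down what $\xi^k\in\partial\varphi(x^k)$ means, namely
\begin{equation*}
\varphi(z)\ge \varphi(x^k)+\langle \xi^k, z-x^k\rangle\quad\text{for all } z\in\mathbb{R}^n.
\end{equation*}
If I had continuity of $\varphi$ at $x$, I could let $k\to\infty$ directly and be done, since $\langle \xi^k, z-x^k\rangle\to \langle \xi, z-x\rangle$ from $\xi^k\to\xi$ and $x^k\to x$. With only lsc, I only get $\varphi(x)\le\liminf_k\varphi(x^k)$, which yields the correct direction on the right-hand side only if I can also bound $\limsup_k\varphi(x^k)$ from above.

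The key trick is to apply the subdifferential inequality at the special point $z=x$, which gives
\begin{equation*}
\varphi(x^k)\le \varphi(x)-\langle \xi^k, x-x^k\rangle.
\end{equation*}
Since $\{\xi^k\}$ is convergent, hence bounded, and $x-x^k\to 0$, the inner product term tends to $0$. Therefore $\limsup_k\varphi(x^k)\le\varphi(x)$, and combining with the lsc inequality yields $\varphi(x^k)\to\varphi(x)$. (Implicitly, this also shows $\varphi(x)<+\infty$, so $x\in\mathrm{dom}(\varphi)$ and the inequality to come is not vacuous.)

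With $\varphi(x^k)\to\varphi(x)$ in hand, I would pass to the limit in the original subdifferential inequality to obtain, for every $z\in\mathbb{R}^n$,
\begin{equation*}
\varphi(z)\ge \varphi(x)+\langle \xi, z-x\rangle,
\end{equation*}
which is exactly $\xi\in\partial\varphi(x)$. The main obstacle, as noted, is the upper-limit bound on $\varphi(x^k)$; once one realizes that plugging $z=x$ into the subdifferential inequality turns the inequality around and controls $\varphi(x^k)$ from above, the rest is routine. The proof uses only convexity (for the subdifferential inequality at $x^k$) and lsc (for the lower bound on $\varphi(x)$); properness is used to ensure the objects in question are meaningful.
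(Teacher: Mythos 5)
Your proof is correct, but note that the paper does not prove this lemma at all: it is quoted verbatim as Theorem 24.4 of Rockafellar's \emph{Convex Analysis}, where the standard argument runs through Fenchel conjugates ($\xi\in\partial\varphi(x)$ iff $\varphi(x)+\varphi^*(\xi)-\langle x,\xi\rangle\le 0$, and the left-hand side is lsc in $(x,\xi)$, so its sublevel set at $0$ is closed). Your direct argument via the subdifferential inequality is more elementary and self-contained, which is a genuine plus. Two small remarks. First, the $\limsup$ step is actually superfluous for the conclusion: in the inequality $\varphi(z)\ge\varphi(x^k)+\langle\xi^k,z-x^k\rangle$ the quantity $\varphi(x^k)$ sits on the \emph{lower} side, so taking $\liminf_k$ and using only $\liminf_k\varphi(x^k)\ge\varphi(x)$ (lsc) already yields $\varphi(z)\ge\varphi(x)+\langle\xi,z-x\rangle$; the energy convergence $\varphi(x^k)\to\varphi(x)$ is a nice by-product but not needed. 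Second, your parenthetical claim that plugging $z=x$ shows $\varphi(x)<+\infty$ is not quite right: if $\varphi(x)=+\infty$ the inequality $\varphi(x^k)\le\varphi(x)-\langle\xi^k,x-x^k\rangle$ is vacuous. Finiteness should instead be obtained by plugging a fixed $z_0\in\mathrm{dom}(\varphi)$ (which exists by properness) into the subdifferential inequality, giving $\varphi(x^k)\le\varphi(z_0)-\langle\xi^k,z_0-x^k\rangle\le C$ uniformly and hence $\varphi(x)\le\liminf_k\varphi(x^k)<+\infty$; alternatively, $\varphi(x)=+\infty$ is excluded a posteriori because the limit inequality would force $\varphi\equiv+\infty$, contradicting properness. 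Neither point affects the validity of your argument.
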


%

Let $\varphi: \mathbb{R}^n\rightarrow (-\infty,+\infty]$ be proper, lower semi-continuous, and convex and $\alpha > 0$ be an arbitrary positive constant. Then the proximal mapping (or proximity operator) of $\varphi$ parameterised by $\alpha$, denoted by ${\rm Prox}_{\alpha\varphi}$, maps every $x\in\mathbb{R}^n$ to the unique  minimum point of $\varphi+\frac{1}{2\alpha}\|x-\cdot\|^2;$ i.e.,
$$
{\rm Prox}_{\alpha\varphi}(x)=\argmin_{y\in\mathbb{R}^n} [\varphi(y)+\frac{1}{2\alpha}\|x-y\|^2],\quad\forall x\in \mathbb{R}^n.
$$

\begin{lemma}{\rm(\cite[Example 23.3 and 12.28]{BC2011})} 
\label{le2.6}
Let $\varphi: \mathbb{R}^n\rightarrow (-\infty,+\infty]$ be proper, lower semi-continuous, and convex and let $\alpha$ be an arbitrary positive constant. Then the following conclusions hold:
\vskip 1mm

\noindent
{\rm (i)} for $x, p\in \mathbb{R}^n,$  $p={\rm Prox}_{\alpha\varphi}(x)$ if and only if $\frac{x-p}{\alpha}\in \partial \varphi(p)$;
\vskip 1mm

\noindent
{\rm (ii)} ${\rm Prox}_{\alpha\varphi}$ is firmly nonexpansive.
\end{lemma}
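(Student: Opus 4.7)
The plan is to derive (i) from Fermat's rule for the convex minimization problem defining ${\rm Prox}_{\alpha\varphi}(x)$, and then obtain (ii) as an almost immediate consequence of (i) combined with the monotonicity of the subdifferential.

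For (i), I would fix $x\in\mathbb{R}^n$ and observe that ${\rm Prox}_{\alpha\varphi}(x)$ is by definition the minimizer of
$\psi(y):=\varphi(y)+\frac{1}{2\alpha}\|x-y\|^2$,
which is proper, lsc, and strongly convex (hence has a unique minimizer). By Fermat's rule, $p$ is this minimizer if and only if $0\in\partial\psi(p)$. Since the quadratic $y\mapsto\frac{1}{2\alpha}\|x-y\|^2$ is finite-valued, convex and continuous (in fact differentiable with gradient $\frac{1}{\alpha}(y-x)$), the Moreau--Rockafellar sum rule applies without any constraint qualification and yields $\partial\psi(y)=\partial\varphi(y)+\frac{1}{\alpha}(y-x)$. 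Therefore $0\in\partial\psi(p)$ is equivalent to $\frac{x-p}{\alpha}\in\partial\varphi(p)$, which is exactly the desired characterization.

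For (ii), I would let $p={\rm Prox}_{\alpha\varphi}(x)$ and $q={\rm Prox}_{\alpha\varphi}(y)$, and then apply (i) to obtain $\frac{x-p}{\alpha}\in\partial\varphi(p)$ and $\frac{y-q}{\alpha}\in\partial\varphi(q)$. The subdifferential of a proper convex function is monotone — this is obtained directly from the subdifferential inequality \eqref{SI} by writing it for the pair $(p,\xi_p)$ tested at $z=q$ and for $(q,\xi_q)$ tested at $z=p$, then summing. Applying monotonicity to the two subgradients above gives
$$\Big\langle \tfrac{x-p}{\alpha}-\tfrac{y-q}{\alpha},\, p-q\Big\rangle\ge 0,$$
which rearranges to $\langle x-y,\,p-q\rangle \ge \|p-q\|^2$, i.e., $1$-cocoercivity of ${\rm Prox}_{\alpha\varphi}$; this is the definition of firm nonexpansiveness recalled earlier in Section~2.

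I do not anticipate a genuine obstacle: both parts are standard convex-analytic arguments and every ingredient needed (Fermat's rule, the sum rule for subdifferentials, monotonicity of $\partial\varphi$, strong convexity of $\psi$ giving uniqueness) is either classical or already implicitly used in the excerpt. The only point where I would be a little careful is invoking the subdifferential sum rule, since $\varphi$ may be nonsmooth and extended-valued; this is precisely where the continuity/finite-valuedness of the quadratic penalty is essential, and it is what removes any need for an additional qualification condition.
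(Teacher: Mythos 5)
Your proof is correct: part (i) is the standard Fermat-rule-plus-sum-rule characterization and part (ii) follows from (i) via monotonicity of $\partial\varphi$, exactly as in the classical argument. The paper itself gives no proof of this lemma---it simply cites Bauschke--Combettes---and your derivation is precisely the textbook proof found there, so there is nothing to compare beyond noting that you have correctly reconstructed it (including the key point that the continuity of the quadratic term dispenses with any constraint qualification in the sum rule).
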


\begin{lemma}{\rm(\cite[Corollary 18.17]{BC2011})}\label{lem22.4}
If $\varphi: \mathbb{R}^n \rightarrow \mathbb{R}$ is a convex function with $L$-Lipschitz continuous gradient $\nabla \varphi$, then its gradient $\nabla \varphi$ is $\frac{1}{L}$-cocoercive.
\end{lemma}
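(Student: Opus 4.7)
The plan is to give the classical Baillon--Haddad style argument, using only convexity, the descent lemma (which follows from $L$-Lipschitz continuity of $\nabla\varphi$), and a symmetrization trick. The statement to prove is that for all $x,y\in\mathbb{R}^n$,
\[
\langle \nabla\varphi(x)-\nabla\varphi(y),\,x-y\rangle \;\ge\; \frac{1}{L}\,\|\nabla\varphi(x)-\nabla\varphi(y)\|^2 .
\]

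First, I would fix $y$ and introduce the auxiliary function $\varphi_y(x):=\varphi(x)-\langle \nabla\varphi(y),x\rangle$. Because $\varphi$ is convex and differentiable, so is $\varphi_y$, and one has $\nabla \varphi_y(x)=\nabla\varphi(x)-\nabla\varphi(y)$, which is still $L$-Lipschitz. Moreover $\nabla\varphi_y(y)=0$, so by convexity $y$ is a global minimizer of $\varphi_y$. This reduction is the key conceptual step: it converts the cocoercivity inequality (involving two points) into a statement about how far a convex function with Lipschitz gradient lies above its minimum.

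Next, I would invoke the descent lemma, which is a direct consequence of $L$-Lipschitz continuity of the gradient together with the fundamental theorem of calculus (no convexity needed for this half): for every $u\in\mathbb{R}^n$,
\[
\varphi_y(u) \;\le\; \varphi_y(x) + \langle \nabla\varphi_y(x),\,u-x\rangle + \frac{L}{2}\|u-x\|^2 .
\]
I would then pick the $u$ minimizing the right-hand side, namely $u=x-\frac{1}{L}\nabla\varphi_y(x)$, which yields
\[
\varphi_y(y) \;\le\; \varphi_y(u) \;\le\; \varphi_y(x) - \frac{1}{2L}\|\nabla\varphi_y(x)\|^2 ,
\]
using that $y$ minimizes $\varphi_y$. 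Unwinding the definition of $\varphi_y$ gives the one-sided inequality
\[
\varphi(y) \;\ge\; \varphi(x) + \langle \nabla\varphi(y),\,y-x\rangle + \frac{1}{2L}\|\nabla\varphi(x)-\nabla\varphi(y)\|^2.
\]

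Finally, I would symmetrize by repeating the same argument with $x$ and $y$ swapped, and add the two resulting inequalities; the terms $\varphi(x)$ and $\varphi(y)$ cancel, leaving exactly the cocoercivity bound with constant $1/L$. The main obstacle is essentially conceptual rather than computational: recognizing that the right substitution $\varphi_y(x)=\varphi(x)-\langle\nabla\varphi(y),x\rangle$ turns the two-point cocoercivity estimate into an optimization/descent-lemma application. Once that reduction is made, everything reduces to a one-step descent estimate plus a symmetrization, with no technical difficulty.
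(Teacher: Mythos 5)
The paper does not actually prove this lemma: it is quoted verbatim from \cite[Corollary 18.17]{BC2011} (the Baillon--Haddad theorem), so there is no in-paper argument to compare against. Your proposal supplies the standard textbook proof --- reduce to $\varphi_y(x)=\varphi(x)-\langle\nabla\varphi(y),x\rangle$, note that $y$ is a global minimizer of the convex function $\varphi_y$, apply the descent lemma with the optimal step $u=x-\frac1L\nabla\varphi_y(x)$, and symmetrize --- and this route is sound. One transcription slip: the ``unwound'' inequality you display is reversed. From $\varphi_y(y)\le\varphi_y(x)-\frac{1}{2L}\|\nabla\varphi_y(x)\|^2$ one obtains
\[
\varphi(x)\;\ge\;\varphi(y)+\langle\nabla\varphi(y),\,x-y\rangle+\frac{1}{2L}\|\nabla\varphi(x)-\nabla\varphi(y)\|^2,
\]
not the version you wrote with the roles of $\varphi(x)$ and $\varphi(y)$ interchanged; as literally stated your display is false already for $\varphi(t)=\frac12 t^2$ with $L=1$. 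With the line corrected, adding it to its counterpart with $x$ and $y$ swapped cancels the function values, and the two linear terms combine to $\langle\nabla\varphi(x)-\nabla\varphi(y),\,x-y\rangle$, yielding exactly the $\frac1L$-cocoercivity claim. So the argument is correct once that one display is fixed; the rest (convexity of $\varphi_y$, the descent lemma requiring only Lipschitz continuity of the gradient, the symmetrization) is in order and matches the classical proof of the cited corollary.
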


Let $\varphi: \mathbb{R}^n\rightarrow (-\infty,+\infty]$ be a proper lower semi-continuous function. For $-\infty < \rho_1<\rho_2\leq +\infty$,  set
$$
[\rho_1<\varphi<\rho_2]=\{x\in \mathbb{R}^n \,|\, \rho_1<\varphi(x)<\rho_2\}.
$$
\begin{definition}
\label{def21}
{\rm
The function $\varphi$ is said to have the Kurdyka--Łojasiewicz property at
$\hat{x}\in {\rm dom}(\partial \varphi) $ if there exist $\rho\in (0, +\infty],$
a neighborhood $U$ of $\hat{x}$ and a continuous concave function $\phi: [0, \rho)
\rightarrow \mathbb{R}_+$, such that:\vskip 1mm

\noindent
{\rm (i)} $\phi(0)=0$,\vskip 1mm

\noindent
{\rm (ii)} $\phi$ is $C^1$ on $(0, \rho)$,\vskip 1mm

\noindent
{\rm (iii)} for all $s\in (0, \rho)$, $\phi^\prime(s)>0$,\vskip 1mm

\noindent
{\rm (iv)} for all $x\in U\cap [\varphi(\hat{x})<\varphi<\varphi(\hat{x})+\rho]$,
the Kurdyka--Łojasiewicz inequality holds:
\begin{equation}\label{KL}
\phi^\prime(\varphi(x)-\varphi(\hat{x})){\rm dist}(0, \partial \varphi(x))\geq 1.
\end{equation}
}
\end{definition}

\section{The  contractive difference-of-convex algorithm}
{In this section, we will explain our idea and motivation first. Then, we propose the
contractive difference-of-convex algorithm (cDCA).

\begin{definition}
{\rm
We say that $\hat{x}\in \mathbb{R}^n$ is a critical point of $F:=f+g-h$ if
$$
{{0}}\in \nabla f(\hat{x})+\partial g(\hat{x})-\partial h(\hat{x}).
$$
The set of all critical points of $F$ is denoted by $\mathcal{X}$.
}
\end{definition}

\subsection{Motivation}

It is obvious that the subproblem  \eqref{DCA} may have more than one solution. To obtain a unique solution, the linearized proximal
 method was introduced for solving the problem \eqref{DC} as follows (see, e.g., \cite{Sun,Pang-Razaviyayn-Alvarado}).
\begin{algorithm}{\rm(LPM: The linearized proximal method)
\label{al:3.1}
\quad

\hspace*{0.1 pc} Step 0: Take  {$x^{0}\in \mathbb{R}^n$} and select $\lambda >0$ arbitrarily. Set $k:=0$.\\
\hspace*{1.5 pc} Step 1: For the iterate $x^k,$  take any $\eta^k\in \partial {h}(x^k)$, and compute $x^{k+1}$ via
\begin{equation}\label{eq:4.1}
x^{k+1}=\argmin_{x\in \mathbb{R}^n}\{f(x)+g(x)-\langle\eta^k, x\rangle+\frac{\lambda}{2}\|x-x^k\|^2\},
\end{equation}
\hspace*{1.5 pc} Step 2:  Set $k:=k+1$ and return to Step 1.
}
\end{algorithm}

Note that the subproblem   (\ref{eq:4.1}) has a unique solution since its objective is strongly convex.
The convergence of Algorithm 3.1 is easy to prove by the conventional analysis, see, \cite[Theorem 3.2]{Sun}.
\vskip 2mm

The following lemma shows that the subproblem (\ref{eq:4.1}) is equivalent to the fixed point problem of a  contraction.

\begin{lemma}
\label{lem31}
{For $x^k,\eta^k$ in \eqref{eq:4.1}, let $\lambda,\,\mu>0$ and define $T_k^{\lambda,\mu}: \mathbb{R}^n\rightarrow \mathbb{R}^n$ by
\begin{equation}\label{eq:3.b}
T_k^{\lambda,\mu}(x)=(1-\mu\lambda){x}-\mu\nabla f({x})+\mu\lambda x^k+\mu\eta^k,\quad\forall x\in \mathbb{R}^n.
\end{equation}
Then the following conclusions are valid.}
\begin{itemize}
\item[{\rm(i)}] The subproblem \eqref{eq:4.1} is equivalent to the fixed point problem of finding ${x}^{k+1}$ such that
\begin{equation}\label{contraction}
{\rm Prox}_{\mu g}[T_k^{\lambda,\mu}]x^{k+1}={x}^{k+1}.
\end{equation}

\item[{\rm(ii)}] For any $\lambda>0$ and $\mu\in (0, \frac{2}{2\lambda+L_f}]$,
$
{\rm Prox}_{\mu g}[T_k^{\lambda,\mu}]: \mathbb{R}^n\rightarrow \mathbb{R}^n
$
 is a  contraction with coefficient $1-\mu\lambda.$ For given $\lambda>0$, when $\mu=\frac{2}{2\lambda+L_f},$ the  coefficient reaches its minimum $\frac{L_f}{2\lambda+L_f}$.
\end{itemize}
 \end{lemma}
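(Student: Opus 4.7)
The plan is to attack the two parts separately: for (i) I would use the first-order optimality condition of the subproblem together with the proximal characterization in Lemma \ref{le2.6}(i), and for (ii) I would estimate $\|T_k^{\lambda,\mu}(x)-T_k^{\lambda,\mu}(y)\|$ directly via expansion of the squared norm and the Baillon--Haddad cocoercivity of $\nabla f$ (Lemma \ref{lem22.4}).

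For part (i), I would start from the strongly convex subproblem \eqref{eq:4.1} and write out its first-order optimality condition,
\[
0\in \nabla f(x^{k+1})+\partial g(x^{k+1})-\eta^k+\lambda(x^{k+1}-x^k).
\]
Multiplying the inclusion through by $\mu>0$ and rearranging to isolate $\partial g(x^{k+1})$ on the right gives
\[
\frac{T_k^{\lambda,\mu}(x^{k+1})-x^{k+1}}{\mu}\in \partial g(x^{k+1}),
\]
which by Lemma \ref{le2.6}(i), applied with $\alpha=\mu$ and $\varphi=g$, is exactly the fixed point equation $x^{k+1}={\rm Prox}_{\mu g}(T_k^{\lambda,\mu}(x^{k+1}))$. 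All the steps are reversible, and strong convexity of the subproblem guarantees that its minimizer is unique, so the two formulations are equivalent.

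For part (ii), since ${\rm Prox}_{\mu g}$ is firmly nonexpansive and in particular $1$-Lipschitz (Lemma \ref{le2.6}(ii)), the Lipschitz constant of the composition is dominated by that of $T_k^{\lambda,\mu}$. I would therefore expand
\[
\|T_k^{\lambda,\mu}(x)-T_k^{\lambda,\mu}(y)\|^2=(1-\mu\lambda)^2\|x-y\|^2-2\mu(1-\mu\lambda)\langle\nabla f(x)-\nabla f(y),x-y\rangle+\mu^2\|\nabla f(x)-\nabla f(y)\|^2,
\]
and apply the cocoercivity inequality $\langle\nabla f(x)-\nabla f(y),x-y\rangle\geq L_f^{-1}\|\nabla f(x)-\nabla f(y)\|^2$. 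Under the standing condition $\mu\lambda\le 1$, a short algebraic manipulation shows that the two $\|\nabla f(x)-\nabla f(y)\|^2$ contributions combine with a nonpositive coefficient precisely when $\mu(L_f+2\lambda)\leq 2$, i.e. $\mu\leq 2/(2\lambda+L_f)$. This yields $\|T_k^{\lambda,\mu}(x)-T_k^{\lambda,\mu}(y)\|\leq(1-\mu\lambda)\|x-y\|$; moreover $\mu\le 2/(2\lambda+L_f)<1/\lambda$ forces $1-\mu\lambda\in[L_f/(2\lambda+L_f),\,1)$, so the composition is a genuine contraction.

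The sharpness statement is then a one-line observation: the coefficient $1-\mu\lambda$ is strictly decreasing in $\mu$ on the admissible interval $(0,\,2/(2\lambda+L_f)]$ and is therefore minimized at the right endpoint, where direct evaluation gives $1-2\lambda/(2\lambda+L_f)=L_f/(2\lambda+L_f)$. The main technical point I expect is the use of cocoercivity rather than merely the $L_f$-Lipschitz continuity of $\nabla f$: the triangle inequality would cause $\mu\|\nabla f(x)-\nabla f(y)\|$ to add to, rather than cancel against, the $(1-\mu\lambda)\|x-y\|$ term and would deliver no contractivity at all, whereas cocoercivity produces the quadratic cancellation that pins down the sharp threshold $\mu\leq 2/(2\lambda+L_f)$.
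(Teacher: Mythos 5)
Your proposal is correct and follows essentially the same route as the paper: part (i) is the identical chain of optimality-condition equivalences combined with Lemma \ref{le2.6}(i), and part (ii) rests on the same cocoercivity cancellation (Lemma \ref{lem22.4}) that pins down the threshold $\mu\le 2/(2\lambda+L_f)$. The only cosmetic difference is that the paper packages your squared-norm expansion by factoring $T_k^{\lambda,\mu}=(1-\mu\lambda)\bigl(I-\tfrac{\mu}{1-\mu\lambda}\nabla f\bigr)+\mathrm{const}$ and invoking Lemma \ref{lem22.2} under the equivalent condition $\tfrac{\mu}{1-\mu\lambda}\le\tfrac{2}{L_f}$, rather than expanding the square directly.
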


\begin{proof}
(i) For arbitrary $\lambda>0$ and $\mu>0$, we deduce that
\begin{equation}\label{eq:3.c}
\aligned
{x}^{k+1}\,&{\rm solves\,the\, subproblem}\, (\ref{eq:4.1})\\
&\Longleftrightarrow {{0}}\in \nabla f({x}^{k+1})+\lambda ({x}^{k+1}-x^k)+\partial g({x}^{k+1})-\eta^k\\
& \Longleftrightarrow -\nabla f({x}^{k+1})+\lambda (x^k-{x}^{k+1})+\eta^k\in \partial g({x}^{k+1}) \\
& \Longleftrightarrow {x}^{k+1}-\mu\nabla f({x}^{k+1})+\mu \lambda(x^k-{x}^{k+1})+\mu\eta^k\in {x}^{k+1}+\mu\partial g({x}^{k+1})\\
& \Longleftrightarrow {\rm Prox}_{\mu g}[(1-\mu\lambda){x}^{k+1}-\mu\nabla f({x}^{k+1})+\mu\lambda x^k+\mu\eta^k]={x}^{k+1}.
 \endaligned
 \end{equation}

(ii) For any $\lambda>0$ and $\mu\in (0, \frac{2}{2\lambda+L_f}]$, it is easy to verify that  $\frac{\mu}{1-\mu\lambda}\leq\frac{2}{L_f}$ holds. By using Lemma \ref{lem22.4}, Lemma \ref{lem22.2} and Lemma \ref{le2.6}, we assert that ${\rm Prox}_{\mu g}[T_k^{\lambda,\mu}]: {\mathbb{R}^n\rightarrow \mathbb{R}^n}:x\mapsto{\rm Prox}_{\mu g}[(1-\mu\lambda)(I-\frac{\mu}{1-\mu\lambda}\nabla f)x+\mu\lambda x^k+\mu\eta^k]$ is a  contraction with the coefficient $1-\mu\lambda$.
Obviously, for given $\lambda>0$,  the  coefficient reaches its minimum $\frac{L_f}{2\lambda+L_f}$ when $\mu=\frac{2}{2\lambda+L_f}$.
This completes the proof.
\end{proof}

{In view of Lemma \ref{lem31}, the  solution ${x}^{k+1}$ of the subproblem (\ref{eq:4.1}) is the  fixed point of the  contraction ${\rm Prox}_{\mu g}[T_{k}^{\lambda,\mu}]$ with $\mu\in (0, \frac{2}{2\lambda+L_f}]$. Therefore, we can use the following scheme to solve the subproblem \eqref{eq:4.1}
\begin{equation}
\label{eqd1}
{x}^{k+1}={\rm Prox}_{\mu g}\left[(1-\mu\lambda){x}^{k+1}-\mu\nabla f({x}^{k+1})+\mu\lambda x^k+\mu\eta^k\right].
\end{equation}
Note that the scheme \eqref{eqd1} is implicit in the sense that $x^{k+1}$
appears in both sides of \eqref{eqd1}. Thus, in general, $x^{k+1}$ can not be solved
exactly via \eqref{eqd1}. By Lemma \ref{lem31} and Banach contraction mapping principle, the solution of the subproblem (\ref{eq:4.1}) can be approximated  by Picard iteration at any accuracy.
However, high accuracy generally means a large number of iterations and the prohibitive computational cost.
A popular way to overcome this difficulty is to introduce an iteration termination rule so that the iteration will stop within finite steps. One may ask the question: how do we design  an iteration termination rule to guarantee that the sequence $\{x^{k}\}_{k=0}^\infty$  converges to a solution of the problem \eqref{DC}?

\subsection{Algorithm}

Now, by using Picard iteration to approximate $x^{k+1}$ in \eqref{eqd1}, we introduce a contractive difference-of-convex algorithm where    an adaptive termination rule is presented.

\begin{algorithm}{\rm(cDCA: The contractive difference-of-convex algorithm)
\label{al4.2}

\hspace*{0.1 pc} {Step 0: Take {${\rm tol}>0$}, $x^{-1},x^{0}\in \mathbb{R}^n$ with $x^{-1}\neq x^0$.  Select  $\lambda > 0$ and $\delta\in (0, \frac{2\lambda}{L_f})$
 \\
 \hspace*{5.0 pc}  arbitrarily, and set $\mu=\frac{2}{2\lambda+L_f}$. Set $k:=0$.\\}
\hspace*{1.5 pc} Step 1:  For the iterate $x^k,$  select $x_0^{k+1}$,  take any  $\eta^k\in \partial h(x^k)$, and   calculate\\
\begin{equation}\label{eq:4.102a}
x_{m}^{k+1}={\rm Prox}_{\mu g}\left[(1-\mu\lambda)x_{m-1}^{k+1}-\mu\nabla f(x_{m-1}^{k+1})+\mu\lambda x^k+\mu\eta^k\right],\,\,m=1,...,m_{k},\\
 \end{equation}
 \hspace*{5.0 pc} where  $m_k$ is the smallest  positive integer such that\\
\begin{equation}\label{eq:4.2a}
\| x^{k+1}_{m_k}-x^{k+1}_{m_k-1} \| \leq \delta\|x^{k-1}-x^k\|.
\end{equation}
\hspace*{1.5 pc} Step 2: {If $\|x^{k+1}_{m_k}-x^k\|/{\max\{1,\|x^k\|\}}\geq {\rm tol}$, set $$x^{k+1}=x^{k+1}_{m_k},$$
\hspace*{5.0 pc} otherwise, calculate $x^{k+1}_{m_k+1}$ via (\ref{eq:4.102a}). If $\|x^{k+1}_{m_k+1}-x^k\|/{\max\{1,\|x^k\|\}}<{\rm tol}$, stop; \\
\hspace*{5.0 pc}  {\rm otherwise, set}}}\\
 \hspace*{5.0 pc} {
$$x^{k+1}=x^{k+1}_{m_k+1}.$$}
\hspace*{5.0 pc} {\rm Set} $k:=k+1$ {\rm and return to Step 1.}
\end{algorithm}

\begin{remark}
\rm
There are two observations for the implement of cDCA.
\vskip 1mm

\noindent
(i) In principle, the initial point $x_0^{k+1}$ of  the {inner loop} (\ref{eq:4.102a}) can be arbitrarily selected in $\mathbb{R}^n$. However choosing a good initial point can undoubtedly reduce the numbers of the {inner loop}. Motivated by the multi-step inertial methods \cite{CL,Dong}, $x_0^{k+1}=x^{k}+\sum_{i\in \mathcal{S}}a_{i,k}(x^{k-i}-x^{k-i-1})$ is naturally a good choice, where  $\mathcal{S}\overset{\rm def}=\{0,1,\ldots,s\}$, $s\in \mathbb{N}$, and $\{a_{i,k}\}_{i\in \mathcal{S}}\in \mathbb{R}^{s+1}$ with $x^{-i}=x^0$, $i\in \mathcal{S}\backslash$$\{1\}$.
\vskip 1mm

\noindent
(ii) By Lemma 3.1,   for each $k\geq 0$, ${\rm Prox}_{\mu g}[T_k^{\lambda,\mu}]$ is  a  contraction with the coefficient $\frac{L_f}{2\lambda+L_f}$, thus we assert that (\ref{eq:4.102a}) terminates in a finite number of iterates provided $x^k\neq x^{k-1}$.
\end{remark}

\begin{remark}
\rm
Although similar inexact version of the DC problem was already discussed in  \cite{LeLe},  it is  very different from cDCA.  On the one hand,  in each iteration, the inexact version in \cite{LeLe} does not give a specific scheme for solving the convex  subproblem, while the core of cDCA is to try to give a good calculation scheme for the convex subproblem. On the other hand, in the $k$th iteration, the inexact version in \cite{LeLe} computes a $\varepsilon_k$-solution of the convex subproblem, where $\varepsilon_k>0$ is a given error such that $\sum_{k=1}^\infty \varepsilon_k<+\infty$ to  ensure the convergence of the algorithm, while cDCA establishes the iterative termination rule  in an adaptive way (\ref{eq:4.2a}) for the convex subproblem.
\end{remark}

The following lemma ascertains the
validity of the stopping criterion used in Step 2, that is, $x^k$ is a approximate critical point of $F$ when Algorithm 3.2 terminates.

\begin{lemma}
\label{lem43}
Let $ \{x^{k}\}_{k=0}^\infty$ be the sequence  generated by cDCA. If $x^{k+1}_{m_k}=x^k=x^{k+1}_{m_k+1}$ for some $k$, then $x^k$  is a  critical point of $F$.
\end{lemma}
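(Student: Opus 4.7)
The plan is to unwind the single Picard step that produces $x^{k+1}_{m_k+1}$ from $x^{k+1}_{m_k}$ under the stated equality hypothesis, and then convert the resulting fixed-point identity into the inclusion $0\in\nabla f(x^k)+\partial g(x^k)-\partial h(x^k)$ via the proximal characterization in Lemma \ref{le2.6}(i).

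First I would write out the defining recursion \eqref{eq:4.102a} at index $m_k+1$, namely
\[
x^{k+1}_{m_k+1}=\mathrm{Prox}_{\mu g}\bigl[(1-\mu\lambda)x^{k+1}_{m_k}-\mu\nabla f(x^{k+1}_{m_k})+\mu\lambda x^k+\mu\eta^k\bigr],
\]
and substitute the hypothesis $x^{k+1}_{m_k}=x^k=x^{k+1}_{m_k+1}$. The $(1-\mu\lambda)x^k$ and $\mu\lambda x^k$ terms combine into a clean $x^k$, leaving
\[
x^k=\mathrm{Prox}_{\mu g}\bigl[x^k-\mu\nabla f(x^k)+\mu\eta^k\bigr].
\]

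Next I would invoke Lemma \ref{le2.6}(i) with $p=x$, $x=x^k-\mu\nabla f(x^k)+\mu\eta^k$, and $\alpha=\mu$. This gives
\[
\frac{\bigl[x^k-\mu\nabla f(x^k)+\mu\eta^k\bigr]-x^k}{\mu}=-\nabla f(x^k)+\eta^k\in\partial g(x^k),
\]
which rearranges to $0\in\nabla f(x^k)+\partial g(x^k)-\eta^k$. Since $\eta^k\in\partial h(x^k)$ by the prescription in Step 1 of Algorithm \ref{al4.2}, we obtain $0\in\nabla f(x^k)+\partial g(x^k)-\partial h(x^k)$, which is exactly the definition of a critical point of $F$.

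There is no real obstacle here; the argument is essentially a direct reversal of part (i) of Lemma \ref{lem31} applied at the iterate $x^k$ itself. The only small thing to check carefully is that the algebraic cancellation of the $\lambda$-terms genuinely reduces the argument of $\mathrm{Prox}_{\mu g}$ to $x^k-\mu\nabla f(x^k)+\mu\eta^k$, so that the $\mu$-scaling needed to apply Lemma \ref{le2.6}(i) lines up and produces the critical point inclusion rather than some perturbed version of it.
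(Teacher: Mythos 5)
Your proposal is correct and follows essentially the same route as the paper: the paper likewise observes that the hypothesis makes $x^k$ a fixed point of ${\rm Prox}_{\mu g}[T_k^{\lambda,\mu}]$ and then invokes the equivalence chain \eqref{eq:3.c} of Lemma \ref{lem31}(i), which is precisely the prox-characterization computation you carry out explicitly (with the $\lambda$-term vanishing because $x^{k+1}=x^k$). The only difference is presentational: you unwind Lemma \ref{le2.6}(i) by hand where the paper cites \eqref{eq:3.c}.
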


\begin{proof}
Suppose $x^{k+1}_{m_k}=x^k=x^{k+1}_{m_k+1}$, then  ${\rm Prox}_{\mu g}[T_k^{\lambda,\mu}]x^{k}=x^{k}$, which means that $x^{k}$ is  the unique fixed point of ${\rm Prox}_{\mu g}[T_k^{\lambda,\mu}]$. In view of (\ref{eq:3.c}), it follows that ${0}\in \nabla f(x^{k})+\partial g(x^{k})-\partial h(x^{k})$, i.e., $x^{k}$ is a critical point of $F$.
\end{proof}

From Lemma \ref{lem43}, we see that if cDCA terminates in a finite (say $k$) step of
iterations, then $x^{k}$ is an approximate critical point of $F$. So in the convergence analysis of the next section, we
assume that cDCA does not terminate in any finite iterations, and hence generates
an infinite sequence $\{x^{k}\}_{k=0}^\infty$.

\section{Convergence analysis}

To show the convergence of  cDCA,
we introduce an auxiliary function
$$
E(x,y)=F(x)+\tau\|x-y\|^2,\quad\forall (x, y)\in \mathbb{R}^n\times \mathbb{R}^n,
$$
where $\tau=\frac{(1-\mu\lambda)^2\delta^2}{2\lambda \mu^2}$.  Obviously, we have
$$
E(x,x)=F(x),\quad\forall x\in \mathbb{R}^n.
$$
 From $\delta\in (0, \frac{2\lambda}{L_f})$ and $\mu=\frac{2}{2\lambda+L_f}$,  it follows
$\tau=\frac{L_f^2\delta^2}{8\lambda}<\frac{\lambda}{2}$.

{We firstly show the global subsequential convergence of the sequence $ \{x^{k}\}_{k=0}^\infty$  generated by cDCA.}

\begin{theorem}\label{th4.2}\
Let $ \{x^{k}\}_{k=0}^\infty$ be the sequence  generated by cDCA. Then the following statements hold:
\vskip 1mm

\noindent
{\rm (i)} The sequence $\{E(x^k,x^{k-1})\}_{k=0}^\infty$  is nonincreasing and satisfies
\begin{equation}
\label{L2-p}
E(x^{k+1},x^k)+(\frac{\lambda}{2}-\tau)\|x^k-{x}^{k+1}\|^2 \leq E(x^k,x^{k-1}),\,\,\forall k\geq 0;
\end{equation}
\vskip 1mm

\noindent
{\rm (ii)} $\omega (x^k)\subset \mathcal{X}$;
\vskip 1mm

\noindent
{\rm (iii)} $\lim_{k\rightarrow \infty}F(x^k)$ and $\lim_{k\rightarrow \infty}E(x^k, x^{k-1})$ exist, and
$
\lim_{k\rightarrow \infty}F(x^k)=\lim_{k\rightarrow \infty}E(x^{k},$ $x^{k-1})=F(\hat{x}),\,\,\,\,\forall \hat{x}\in \omega (x^k).
$
\end{theorem}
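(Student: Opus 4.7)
The plan is to establish (i) first, from which boundedness of $\{x^k\}$, square-summability of $\{\|x^{k+1}-x^k\|\}$, and vanishing of the inner-loop error $\delta^k:=x^{k+1}_{m_k-1}-x^{k+1}$ all follow as routine corollaries; parts (ii) and (iii) then come from passing to subsequential limits in the first-order condition for the Picard iterate, combined with one delicate semicontinuity step that has to be obtained from the proximal characterization rather than from any continuity of $g$.

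For (i), I would first apply Lemma \ref{le2.6}(i) to $x^{k+1}={\rm Prox}_{\mu g}[T_k^{\lambda,\mu}(x^{k+1}_{m_k-1})]$ to produce some $\xi^{k+1}\in\partial g(x^{k+1})$ satisfying
\begin{equation*}
\nabla f(x^{k+1}_{m_k-1})+\xi^{k+1}-\eta^k+\lambda(x^{k+1}-x^k)=\tfrac{1-\mu\lambda}{\mu}\,\delta^k.
\end{equation*}
The upper bound on $F(x^{k+1})-F(x^k)$ then comes from summing four inequalities: the descent lemma for $f$ between $x^{k+1}$ and $x^{k+1}_{m_k-1}$ composed with convexity of $f$ between $x^{k+1}_{m_k-1}$ and $x^k$, which together give $f(x^{k+1})-f(x^k)\leq\langle\nabla f(x^{k+1}_{m_k-1}),x^{k+1}-x^k\rangle+\tfrac{L_f}{2}\|\delta^k\|^2$; the subgradient inequality for $g$ with $\xi^{k+1}$; and that for $-h$ with $\eta^k\in\partial h(x^k)$. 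Substituting the optimality condition above collapses the combined linear term into $\tfrac{1-\mu\lambda}{\mu}\langle\delta^k,x^{k+1}-x^k\rangle-\lambda\|x^{k+1}-x^k\|^2$, and then Young's inequality with weight $\lambda$, the termination bound $\|\delta^k\|\leq\delta\|x^{k-1}-x^k\|$, and the identity $(1-\mu\lambda)/\mu=L_f/2$ coming from $\mu=2/(2\lambda+L_f)$ yield
\begin{equation*}
F(x^{k+1})-F(x^k)\leq\tau\|x^{k-1}-x^k\|^2-\tfrac{\lambda}{2}\|x^{k+1}-x^k\|^2,
\end{equation*}
which, after adding $\tau\|x^{k+1}-x^k\|^2$ to both sides, is \eqref{L2-p}.

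For (ii), the descent \eqref{L2-p} forces $E(x^k,x^{k-1})\leq E(x^0,x^{-1})$, hence $F(x^k)\leq E(x^k,x^{k-1})$ and level-boundedness of $F$ gives boundedness of $\{x^k\}$ and a uniform lower bound on $\{F(x^k)\}$; telescoping \eqref{L2-p} then yields $\sum_k\|x^{k+1}-x^k\|^2<\infty$, so $\|x^{k+1}-x^k\|\to 0$ and $\|\delta^k\|\leq\delta\|x^{k-1}-x^k\|\to 0$. Given $\hat x\in\omega(x^k)$, fix a subsequence $x^{k_l}\to\hat x$; Lemma \ref{le2.22} makes $\{\eta^{k_l}\}$ bounded, so after a further extraction $\eta^{k_l}\to\hat\eta$, and Lemma \ref{le2.3} yields $\hat\eta\in\partial h(\hat x)$. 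Both $x^{k_l+1}$ and $x^{k_l+1}_{m_{k_l}-1}$ converge to $\hat x$, so by continuity of $\nabla f$ the first-order condition forces $\xi^{k_l+1}\to\hat\eta-\nabla f(\hat x)=:\hat\xi$; a second application of Lemma \ref{le2.3}, this time to $\partial g$, places $\hat\xi\in\partial g(\hat x)$, whence $0\in\nabla f(\hat x)+\partial g(\hat x)-\partial h(\hat x)$, i.e., $\hat x\in\mathcal{X}$.

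For (iii), monotonicity of $\{E(x^k,x^{k-1})\}$ together with its lower boundedness gives the existence of $\lim_k E(x^k,x^{k-1})$; the identity $E(x^k,x^{k-1})-F(x^k)=\tau\|x^k-x^{k-1}\|^2\to 0$ then produces $\lim_k F(x^k)=\lim_k E(x^k,x^{k-1})$. To identify this common limit with $F(\hat x)$, lower semicontinuity of $F$ gives $F(\hat x)\leq\liminf_l F(x^{k_l+1})=\lim_k F(x^k)$ immediately; the reverse inequality is the \emph{main obstacle}, since $g$ is only lsc and need not be continuous at $\hat x$. I would overcome it by using the proximal-inequality consequence of \eqref{eq:4.102a},
\begin{equation*}
\mu g(x^{k_l+1})+\tfrac{1}{2}\|x^{k_l+1}-z^{k_l}\|^2\leq\mu g(\hat x)+\tfrac{1}{2}\|\hat x-z^{k_l}\|^2,
\end{equation*}
where $z^{k_l}$ is the bracketed argument of ${\rm Prox}_{\mu g}$ at step $k_l$. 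Since $x^{k_l+1}\to\hat x$ and, by continuity of $\nabla f$ together with $\eta^{k_l}\to\hat\eta$, $z^{k_l}\to\hat x-\mu\nabla f(\hat x)+\mu\hat\eta$, passing to the limit yields $\limsup_l g(x^{k_l+1})\leq g(\hat x)$, which combined with lsc forces $g(x^{k_l+1})\to g(\hat x)$, whence continuity of $f$ and $h$ gives $F(x^{k_l+1})\to F(\hat x)$, as required.
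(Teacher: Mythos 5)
Your parts (ii) and (iii) are sound: (ii) follows the paper's route essentially verbatim, and your prox-inequality argument for the reverse inequality in (iii) is a valid (and arguably cleaner) alternative to the paper's device of passing to the limit in the subgradient inequality $g(\hat{x})\geq g(x^{k_l})+\langle\xi^{k_l},\hat{x}-x^{k_l}\rangle$. The problem is in (i), and it is a genuine gap rather than a presentational one.

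Your chain of inequalities produces
\begin{equation*}
F(x^{k+1})-F(x^k)\leq\frac{1-\mu\lambda}{\mu}\langle\delta^k,x^{k+1}-x^k\rangle-\lambda\|x^{k+1}-x^k\|^2+\frac{L_f}{2}\|\delta^k\|^2,
\end{equation*}
but your stated conclusion silently drops the term $\frac{L_f}{2}\|\delta^k\|^2$ that your own descent-lemma step introduced. Keeping it and using \eqref{eq:4.2a}, the constant multiplying $\|x^{k-1}-x^k\|^2$ becomes $\tau+\frac{L_f\delta^2}{2}$, not $\tau$, so \eqref{L2-p} is not established with the $\tau$ for which $E$ is defined. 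This loss is not cosmetic: the whole argument needs the coefficient to stay below $\frac{\lambda}{2}$, and $\tau+\frac{L_f\delta^2}{2}<\frac{\lambda}{2}$ forces $\delta^2<\frac{4\lambda^2}{L_f(L_f+4\lambda)}$, which excludes part of the admissible range $\delta\in(0,\frac{2\lambda}{L_f})$ — in particular the paper's own experimental choice $\delta=\frac{1.99\lambda}{L_f}$ with $\lambda=0.1L_f$. So with your estimate the monotonicity of $E$ and the summability $\sum_k\|x^{k+1}-x^k\|^2<\infty$ in (ii) both break down. The paper avoids the descent lemma entirely: it writes the subdifferential inequality for $f+g$ at $x^{k+1}_{m_k}$ itself (so the gradient appearing in the linear term is $\nabla f(x^{k+1}_{m_k})$, not $\nabla f(x^{k+1}_{m_k-1})$), and then bundles the resulting gradient difference with the inner-loop displacement as
\begin{equation*}
\frac{1-\mu\lambda}{\mu}\Bigl[\bigl(I-\tfrac{\mu}{1-\mu\lambda}\nabla f\bigr)(x^{k+1}_{m_k})-\bigl(I-\tfrac{\mu}{1-\mu\lambda}\nabla f\bigr)(x^{k+1}_{m_k-1})\Bigr],
\end{equation*}
which is bounded in norm by $\frac{1-\mu\lambda}{\mu}\|x^{k+1}_{m_k}-x^{k+1}_{m_k-1}\|$ because $\frac{\mu}{1-\mu\lambda}=\frac{2}{L_f}$ and the $\frac{1}{L_f}$-cocoercivity of $\nabla f$ (Lemmas \ref{lem22.2} and \ref{lem22.4}) make $I-\frac{2}{L_f}\nabla f$ nonexpansive; see \eqref{eq:new2}--\eqref{eq:3.2ab}. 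After Young's inequality this yields exactly \eqref{eq3.6} with the constant $\tau$, with no residual $O(L_f\|\delta^k\|^2)$ term. You should rework (i) along these lines; the rest of your argument then goes through.
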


\begin{proof} (i) We consider separately two  possible values of $x^{k+1}$. For the case of $x^{k+1}=x^{k+1}_{m_k}$, using (\ref{eq:4.102a}) and (\ref{eq:3.b}), we have

\begin{equation}\label{new:1a}
\aligned
x^{k+1}_{m_k}&={\rm Prox}_{\mu g}[T_k^{\lambda,\mu}](x^{k+1}_{m_k-1})\\
& ={\rm Prox}_{\mu g}[(1-\mu\lambda)x^{k+1}_{m_k-1}-\mu\nabla  f(x^{k+1}_{m_k-1})+\mu\eta^k+\mu\lambda x^k],
\endaligned
\end{equation}
which implies
\begin{equation*}
 (1-\mu\lambda)x^{k+1}_{m_k-1}-\mu\nabla  f(x^{k+1}_{m_k-1})+\mu\eta^k+\mu\lambda x^k\in x^{k+1}_{m_k}+\mu\partial g(x^{k+1}_{m_k}).
\end{equation*}
Hence there exists  $\xi^{k+1}\in \partial g(x^{k+1}_{m_k})$ such that
\begin{equation}\label{eq:4.77a}
\aligned
\nabla f(x^{k+1}_{m_k})+\xi^{k+1}=&\frac{1-\mu\lambda}{\mu}(x^{k+1}_{m_k-1}-x^{k+1}_{m_k})+
\lambda(x^k-x^{k+1}_{m_k})\\
&+\nabla f(x^{k+1}_{m_k})-\nabla f(x^{k+1}_{m_k-1})+\eta^k.
\endaligned
\end{equation}
{From  the subdifferential inequality, it follows
\begin{equation}\label{eqde1}
\langle \nabla f(x^{k+1}_{m_k})+\xi^{k+1}, x^k-x^{k+1}_{m_k}\rangle\le f(x^k)+g(x^k)
-f(x^{k+1}_{m_k})-g(x^{k+1}_{m_k}),
\end{equation}
and
\begin{equation}\label{eqde2}
\langle\eta^k, x^{k}-x^{k+1}_{m_k}\rangle\geq h(x^k)-h(x^{k+1}_{m_k}).
\end{equation}
Combining \eqref{eq:4.77a}, \eqref{eqde1} and \eqref{eqde2}, we obtain}
\begin{equation}\label{eq:new2}
\aligned
&\lambda\|x^k-x^{k+1}_{m_k}\|^2\\
\leq &F(x^k)-F(x^{k+1}_{m_k}) -\langle \nabla
f(x^{k+1}_{m_k})-\nabla f(x^{k+1}_{m_k-1}), x^k-x^{k+1}_{m_k}\rangle\\
&+\frac{1-\mu\lambda}{\mu}\langle x^{k+1}_{m_k}-x^{k+1}_{m_k-1}, x^k-x^{k+1}_{m_k}\rangle\\
\leq &F(x^k)-F(x^{k+1}_{m_k})\\
&+\frac{1-\mu\lambda}{\mu}\langle (I-\frac{\mu}{1-\mu\lambda}\nabla f)(x^{k+1}_{m_k})-(I-\frac{\mu}{1-\mu\lambda}\nabla f)(x^{k+1}_{m_k-1}), x^k-x^{k+1}_{m_k}\rangle.
\endaligned
\end{equation}
Noting that $1-\frac{\mu}{1-\lambda\mu}=\frac2{L_f},$ by Lemmas \ref{lem22.2} and \ref{lem22.4},  $ I-\frac{\mu}{1-\lambda\mu}\nabla f   $ is nonexpansive.  We get from  (\ref{eq:4.2a}) that
\begin{equation}\label{eq:3.2ab}
\aligned
&\left|\frac{1-\mu\lambda}{\mu}\langle (I-\frac{\mu}{1-\mu\lambda}\nabla f)(x^{k+1}_{m_k})-(I-\frac{\mu}{1-\mu\lambda}\nabla f)(x^{k+1}_{m_k-1}), x^k-x^{k+1}_{m_k}\rangle\right|\\
\leq &\frac{\lambda}{2}\|x^k-x^{k+1}_{m_k}\|^2+ \frac{(1-\mu \lambda)^2\delta^2}
{2\lambda\mu^2}\|x^{k-1}-x^{k}
\|^2.
\endaligned
\end{equation}
Noting $x^{k+1}=x^{k+1}_{m_k}$ and $\tau=\frac{(1-\mu\lambda)^2\delta^2}{2\lambda \mu^2}$, it derives from (\ref{eq:new2}) and (\ref{eq:3.2ab})  that
\begin{equation}\label{eq3.6}
\frac{\lambda}{2}\|x^k-{x}^{k+1}\|^2 \leq F(x^k)-F(x^{k+1})+\tau\|x^{k-1}-x^k\|^2.
\end{equation}
For the case of $x^{k+1}=x^{k+1}_{m_k+1}$, by (\ref{eq:4.102a}), we get
\begin{equation*}
x^{k+1}_{m_k+1}={\rm Prox}_{\mu g}[T_k^{\lambda,\mu}](x^{k+1}_{m_k}).
\end{equation*}
Note that
\begin{equation*}
\| x^{k+1}_{m_k+1}-x^{k+1}_{m_k} \| \leq \delta\|x^{k-1}-x^k\|,
\end{equation*}
it is easy to see that (\ref{eq3.6}) can also be derived through a similar derivation as above. From (\ref{eq3.6}), we have \eqref{L2-p}  which implies that  the sequence $\{E(x^k,x^{k-1})\}_{k=0}^\infty$
is nonincreasing due to  $\tau<\frac{\lambda}{2}$.

(ii) From \eqref{eq3.6}, we have
\begin{equation}\label{eq3.77}
\aligned
(\frac{\lambda}{2}-\tau) \sum_{l=0}^k\|x^l-x^{l+1}\|^2\leq &F(x^0)-F(x^{k+1})+\tau \|x^{-1}-x^0\|^2\\
\leq &F(x^0)-F(x^*)+\tau \|x^{-1}-x^0\|^2,
\endaligned
\end{equation}
{where $x^*\in \mathcal{S}$.}
Since $\tau<\frac{\lambda}{2}$,  (\ref{eq3.77}) implies that
\begin{equation} \label{eq3.8}
\sum_{k=0}^\infty\|x^k-x^{k+1}\|^2<\infty \,\,\,{\rm and}\,\, \|x^k-x^{k+1}\|
\rightarrow 0\, \, (k\rightarrow \infty).
\end{equation}

From \eqref{L2-p}, $\{E(x^k, x^{k-1}):=F(x^k)+\tau\|x^{k-1}-x^k\|^2\}_{k=0}^\infty$
is strictly decreasing, which together with (\ref{eq3.8}) implies that
there exists a finite limit value $\lim_{k\rightarrow \infty}F(x^k)$. Since $F$
is level-bounded on $\mathbb{R}^n$, we assert that $\{x^k\}_{k=0}^\infty$ is bounded.
For any $\hat{x}\in \omega(x^k)$, there exists a subsequence
$\{x^{k_l}\}_{l=0}^\infty \subset \{x^k\}_{k=0}^\infty$ such that
$x^{k_l}\rightarrow \hat{x}$ as $l\rightarrow \infty.$
From (\ref{eq:4.77a}), we get
\begin{equation}\label{eq:3.506}
\aligned
\nabla f(x^{k_l+1}_{m_{k_l}})+\xi^{k_l+1}-\eta^{k_l}=
&\frac{1-\mu\lambda}{\mu}(x^{k_l+1}_{m_{k_l}-1}-x^{{k_l}+1}_{m_{k_l}})+
\lambda(x^{k_l}-x^{{k_l}+1}_{m_{k_l}})\\
&+\nabla f(x^{k_l+1}_{m_{k_l}})-\nabla f(x^{{k_l}+1}_{m_{k_l}-1}).
\endaligned
\end{equation}
By Lemma \ref{le2.22}, $\{\eta^{k_l}\}_{l=0}^\infty$ is  bounded {and therefore it has a
convergent subsequence.}
Without loss of generality, we assume that  $\{\eta^{k_l}\}_{l=0}^\infty$ is
convergent. Noting $x^{k_l+1}=x^{k_l+1}_{m_{k_l}}$, since
$x^{k_l}-x^{k_l+1}\rightarrow 0$,
it follows that ${x}^{k_l+1}\rightarrow \hat{x}$ and hence
$\nabla f({x}^{k_l+1})\rightarrow \nabla f(\hat{x})$ as $l\rightarrow \infty$.
 On the other hand, we assert from
(\ref{eq:4.2a}) and (\ref{eq3.8})  that $x^{k_l+1}_{m_{k_l-1}}-x^{k_l+1}\rightarrow 0$
and $\nabla f(x^{k_l+1})-\nabla f(x^{k_l+1}_{m_{k_l-1}})\rightarrow 0$ hold as
$l\rightarrow \infty$. Consequently, (\ref{eq:3.506})  together with these facts
concludes that  $\{{\xi}^{k_l+1}\}_{l=0}^\infty$ is also convergent. Letting $l\rightarrow \infty$ in (\ref{eq:3.506}),  we have
$$
\nabla f(\hat{x})+\hat{\xi}-\hat{\eta}={ 0},
$$
where $\hat{\eta}=\lim_{l\rightarrow \infty}\eta^{k_l}$ and $\hat{\xi}=\lim_{l\rightarrow \infty}{\xi}^{k_l+1}$. This means
that $\hat{x}\in \mathcal{X}$ since $\hat{\xi}\in \partial g(\hat{x})$ and
{$\hat{\eta}\in \partial h(\hat x)$} hold by using Lemma \ref{le2.3} and hence $\omega (x^k)\subset \mathcal{X}$
 follows.

{(iii) Taking $\hat{x}\in \omega (x^k)$ arbitrarily and  $\{x^{k_l}\}_{l=0}^\infty \subset \{x^k\}_{k=0}^\infty$ such that $x^{k_l}\rightarrow \hat{x}$ as $l\rightarrow \infty$,
from the lower semicontinuity of $F$, we get
\begin{equation}\label{new1}
F(\hat{x})\leq \varliminf_{l\rightarrow \infty}F(x^{k_l})=\lim_{k\rightarrow \infty} F(x^k)=\lim_{k\rightarrow \infty}E(x^{k},x^{k-1}).
\end{equation}
On the other hand, using subdifferential  inequality, we have
$$
g(\hat{x})\geq g(x^{k_l})+\langle \xi^{k_l}, \hat{x}-x^{k_l}\rangle,
$$
where $\xi^{k_l}\in \partial g(x^{k_l}).$ Consequently,
\begin{equation}\label{new2}
f(x^{k_l})+g(\hat{x})-h(x^{k_l})\geq F(x^{k_l})+\langle \xi^{k_l},
\hat{x}-x^{k_l}\rangle.
\end{equation}
{From the proof process in (ii), we can further assume that $\{\xi^{k_l}\}_{l=0}^\infty$ is convergent and its limit belongs to $\partial g(\hat{x})$. Noting this fact and the continuity of $f$ and $h$,} we obtain by making $l\rightarrow \infty$
in (\ref{new2}) that
$$
F(\hat{x})\geq \lim_{l\rightarrow \infty}F(x^{k_l})=
\lim_{k\rightarrow \infty} F(x^k)=\lim_{k\rightarrow \infty}E(x^{k},x^{k-1}).
$$
Hence this together with (\ref{new1}) leads to the desired result.
}
\end{proof}

{Now we consider the global convergence property of the whole sequence $\{x^{k}\}_{k=0}^\infty$.}
\vskip 1mm

{If $h$ is  differentiable at $\bar{x}\in \mathbb{R}^n$, it is easy
to verify that for any $y\in \mathbb{R}^n$
$$\partial_x E(\bar{x},y)=\nabla f(\bar{x})+\partial g(\bar{x})-\nabla h(\bar{x})+2\tau(\bar{x}-y),$$
$$\partial_y E(\bar{x},y)=2\tau  (y-\bar{x}),$$
and hence
$$
\partial E(\bar{x},y)=(\nabla f(\bar{x})+\partial g(\bar{x})-\nabla h(\bar{x})+2\tau (\bar{x}-y),\,2\tau (y-\bar{x})).
$$

\begin{lemma}
\label{lem44}
Let $\{x^{k}\}_{k=0}^\infty$ be the sequence  generated by cDCA.  {Assume that $\nabla h$ is  $L_h$-Lipschitz continuous  on  ${\rm co}(\{x^k\}_{k=0}^\infty)$.}
 Then there exist $w^{k+1}\in \partial F(x^{k+1})$ such that
$$
||w^{k+1}||\le \frac{(1-\mu\lambda)\delta}{\mu}||x^{k}-x^{k-1}||+(\lambda+L_{ h})||x^{k+1}-x^{k}||,
$$
and $v^{k+1}\in \partial E(x^{k+1},x^k)$ such that
$$
||v^{k+1}||\le \frac{(1-\mu\lambda)\delta}{\mu}||x^{k}-x^{k-1}||
+(\lambda+L_{ h}+4\tau )||x^{k+1}-x^{k}||.
$$
\end{lemma}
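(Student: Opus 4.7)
The natural starting point is the identity \eqref{eq:4.77a} established inside the proof of Theorem~\ref{th4.2}, which I would specialize to the case $x^{k+1}=x^{k+1}_{m_k}$ first and handle $x^{k+1}=x^{k+1}_{m_k+1}$ by the obvious parallel argument (the contraction inequality \eqref{eq:4.2a} still governs the distance between the two last inner iterates in that case as well). Since $\nabla h$ is $L_h$-Lipschitz on $\mathrm{co}(\{x^k\}_{k=0}^\infty)$, the function $h$ is differentiable on that set, so $\eta^k=\nabla h(x^k)$ and $\partial h(x^{k+1})=\{\nabla h(x^{k+1})\}$. Therefore I would define
\[
w^{k+1}:=\nabla f(x^{k+1})+\xi^{k+1}-\nabla h(x^{k+1}),
\]
which lies in $\partial F(x^{k+1})$ by construction, and rewrite it via \eqref{eq:4.77a} as
\[
w^{k+1}=\frac{1-\mu\lambda}{\mu}(x^{k+1}_{m_k-1}-x^{k+1})+\bigl[\nabla f(x^{k+1})-\nabla f(x^{k+1}_{m_k-1})\bigr]+\lambda(x^k-x^{k+1})+\nabla h(x^k)-\nabla h(x^{k+1}).
\]

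The main obstacle is controlling the first two terms together, because a naive triangle inequality followed by the $L_f$-Lipschitz bound for $\nabla f$ would introduce an unwanted $L_f$ factor that is absent from the claimed estimate. The key trick is exactly the one already exploited in the derivation of \eqref{eq:3.2ab}: factoring out $(1-\mu\lambda)/\mu$ rewrites the bracket as $\frac{1-\mu\lambda}{\mu}$ times $\bigl(I-\tfrac{\mu}{1-\mu\lambda}\nabla f\bigr)(x^{k+1}_{m_k-1})-\bigl(I-\tfrac{\mu}{1-\mu\lambda}\nabla f\bigr)(x^{k+1})$. Since $\mu=\frac{2}{2\lambda+L_f}$ gives $\frac{\mu}{1-\mu\lambda}=\frac{2}{L_f}$, Lemmas \ref{lem22.2} and \ref{lem22.4} make this operator nonexpansive, so the combined norm is at most $\frac{1-\mu\lambda}{\mu}\|x^{k+1}_{m_k-1}-x^{k+1}\|$.

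Feeding in the stopping rule \eqref{eq:4.2a}, $\|x^{k+1}_{m_k-1}-x^{k+1}\|\le\delta\|x^{k-1}-x^k\|$, and the $L_h$-Lipschitz bound for $\nabla h(x^k)-\nabla h(x^{k+1})$, a single triangle inequality then yields
\[
\|w^{k+1}\|\le\frac{(1-\mu\lambda)\delta}{\mu}\|x^{k}-x^{k-1}\|+(\lambda+L_h)\|x^{k+1}-x^{k}\|,
\]
which is the first assertion. For the second, the formula for $\partial E$ displayed right before the lemma shows that
\[
v^{k+1}:=\bigl(w^{k+1}+2\tau(x^{k+1}-x^k),\,2\tau(x^k-x^{k+1})\bigr)\in \partial E(x^{k+1},x^k),
\]
and using $\sqrt{\|a\|^2+\|b\|^2}\le\|a\|+\|b\|$ together with the bound already derived for $\|w^{k+1}\|$ and two more $2\tau\|x^{k+1}-x^k\|$ terms gives exactly the second claimed estimate. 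The case $x^{k+1}=x^{k+1}_{m_k+1}$ follows verbatim once one notes that $\|x^{k+1}_{m_k+1}-x^{k+1}_{m_k}\|\le\delta\|x^{k-1}-x^k\|$ (which is either a direct consequence of the contraction property of ${\rm Prox}_{\mu g}[T_k^{\lambda,\mu}]$ applied to \eqref{eq:4.2a}, or is the stopping inequality used in Step~2 itself).
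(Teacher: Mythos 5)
Your proposal is correct and follows essentially the same route as the paper: both express $w^{k+1}$ via \eqref{eq:4.77a}, absorb the $\nabla f$ difference into the nonexpansive operator $I-\tfrac{\mu}{1-\mu\lambda}\nabla f$ to avoid an extraneous $L_f$ factor, invoke \eqref{eq:4.2a} and the Lipschitz continuity of $\nabla h$, and then bound $v^{k+1}$ through the product structure of $\partial E$. Your explicit treatment of the case $x^{k+1}=x^{k+1}_{m_k+1}$ is a minor point the paper leaves implicit, but it changes nothing substantive.
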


\begin{proof}
Let $\xi^{k+1}\in \partial g(x^{k+1}_{m_k})$ such that \eqref{eq:4.77a} {holds} and
$w^{k+1}=\nabla f(x^{k+1})+\xi^{k+1}-\nabla h(x^{k+1})$. Then we have from
 \eqref{eq:4.77a} that
\begin{equation}\label{add 1}
\aligned
w^{k+1}&=\frac{1-\mu\lambda}{\mu}\{(I-\frac{\mu}{1-\mu\lambda}\nabla f)(x^{k+1}_{m_k-1})-(I-\frac{\mu}{1-\mu\lambda}\nabla f)(x^{k+1})\}\\
&+\lambda (x^k-x^{k+1})+\nabla h(x^k)-\nabla h(x^{k+1}).
\endaligned
\end{equation}
Note that $ I-\frac{\mu}{1-\lambda\mu}\nabla f   $ is nonexpansive,  we get from  (\ref{add 1})  and  (\ref{eq:4.2a}) that
$$
||w^{k+1}||\le \frac{(1-\mu\lambda)\delta}{\mu}||x^{k}-x^{k-1}||+(\lambda+L_{ h})||x^{k+1}-x^{k}||.
$$
Let $v^{k+1}=(w^{k+1}+2\tau (x^{k+1}-x^k),2\tau (x^k-x^{k+1}))\in \partial E(x^{k+1},x^k)$. Then
$$
\aligned
\|v^{k+1}\|&\le\|w^{k+1}\|+4\tau \|x^{k+1}-x^k||\\
&\le  \frac{(1-\mu\lambda)\delta}{\mu}||x^{k}-x^{k-1}||+(\lambda+L_{ h}+4\tau )||x^{k+1}-x^{k}||.
\endaligned
$$
This completes the proof.
\end{proof}

\begin{remark}
\rm
There are some DC regularizers $g-h$ satisfying the assumption in Lemma \ref{lem44}   that arise in applications, such as MCP regularizer, SCAD regularizer and logarithmic regularizer (see  \cite{Wen} for more details).
\end{remark}

Using Theorem \ref{th4.2} (i) and (ii), Lemma \ref{lem44} and \cite[Theorem 3.7]{OCTP}, we get the convergence of $\{x^{k}\}_{k=0}^\infty$ to a critical point of $F$.
}

\begin{theorem}
\label{th45}
Let $\{x^{k}\}_{k=0}^\infty$ be the sequence  generated by cDCA and {$\hat x$ be some point in $\omega(x^k)$.
 Suppose that  $E$ has the
Kurdyka--Lojasiewicz property at $(\hat{x}, \hat{x})$ and
$\nabla h$ is  Lipschitz continuous on
${\rm co}(\{x^k\}_{k=0}^\infty)$.}
Then $\{x^{k}\}_{k=0}^\infty$ converges to a critical point of $F$.
\end{theorem}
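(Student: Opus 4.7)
The plan is to verify the three standard hypotheses of an abstract Kurdyka--Łojasiewicz descent framework (in the form of \cite[Theorem 3.7]{OCTP}) for the auxiliary function $E$ applied to the pairs $\{(x^{k+1},x^{k})\}_{k=0}^{\infty}$, and then invoke that framework directly. The ingredients are (H1) sufficient decrease of $\{E(x^{k+1},x^{k})\}$, (H2) a relative-error bound on $\|v^{k+1}\|$ with $v^{k+1}\in\partial E(x^{k+1},x^{k})$, and (H3) continuity of $E$ along a convergent subsequence of the iterates. Once these are verified, the KL assumption at $(\hat x,\hat x)$ supplies the summability needed to prove Cauchy-ness.

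First I would collect the preparatory facts from what has already been proved. Theorem 4.2(i) delivers (H1) with decrease modulus $\lambda/2-\tau>0$. Lemma 4.4 delivers (H2) in the two-step form
\[
\|v^{k+1}\|\le \tfrac{(1-\mu\lambda)\delta}{\mu}\,\|x^{k}-x^{k-1}\|+(\lambda+L_h+4\tau)\,\|x^{k+1}-x^{k}\|.
\]
Theorem 4.2(iii) together with $\|x^{k+1}-x^{k}\|\to 0$ (established inside the proof of Theorem 4.2(ii)) delivers (H3): every accumulation point of $\{(x^{k+1},x^{k})\}$ has the form $(\bar x,\bar x)$ with $\bar x\in\omega(x^k)$, and $E(x^{k+1},x^{k})\to F(\bar x)=E(\bar x,\bar x)$. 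Boundedness of $\{x^k\}$, hence existence of a subsequence $x^{k_l}\to\hat x$, follows from level-boundedness of $F$ and monotonicity of $\{E(x^{k+1},x^k)\}$.

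Next I would run the KL iteration argument. Write $r_k:=E(x^{k+1},x^{k})-E(\hat x,\hat x)\ge 0$ and $d_k:=\|x^{k+1}-x^{k}\|$. The subsequential convergence $(x^{k_l+1},x^{k_l})\to(\hat x,\hat x)$ combined with monotonicity of $r_k$ keeps the iterates inside the KL-neighborhood $U$ for all $k$ sufficiently large, so the KL inequality yields $\phi'(r_k)\|v^{k+1}\|\ge 1$. Concavity of $\phi$ together with (H1) then gives
\[
\phi(r_k)-\phi(r_{k+1})\ \ge\ \phi'(r_k)(r_k-r_{k+1})\ \ge\ \frac{(\lambda/2-\tau)\,d_k^{2}}{\|v^{k+1}\|}.
\]
Plugging (H2) into the denominator and invoking the AM--GM inequality $2\sqrt{uv}\le\gamma u+\gamma^{-1}v$ produces an estimate of the form
\[
d_k\ \le\ C_1\bigl[\phi(r_k)-\phi(r_{k+1})\bigr]+C_2\,d_{k-1}+C_3\,d_k,\qquad C_3<1.
\]
Absorbing $C_3 d_k$ into the left side and telescoping in $\phi$ yields $\sum_{k}d_k<+\infty$; hence $\{x^k\}$ is Cauchy and its limit $x^{\ast}$ lies in $\omega(x^k)\subset\mathcal{X}$ by Theorem 4.2(ii).

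The main obstacle I anticipate is the two-step shape of the relative-error bound in Lemma 4.4, which is not the canonical single-step form of Attouch--Bolte--Svaiter. The AM--GM splitting above is the standard workaround, and the cited framework \cite[Theorem 3.7]{OCTP} is formulated precisely to cover auxiliary-function settings of this kind. A secondary subtlety is upgrading KL at the single point $(\hat x,\hat x)$ to a uniform KL inequality along the iterates, but once one shows that the tail of $\{x^k\}$ stays in a single small neighborhood of $\hat x$ (which is itself a consequence of the summability argument driven by the sufficient decrease), KL at this one point is sufficient.
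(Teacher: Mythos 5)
Your proposal is correct and follows essentially the same route as the paper: the paper's proof consists precisely of invoking \cite[Theorem 3.7]{OCTP} after combining the sufficient decrease from Theorem \ref{th4.2}(i), the subsequential convergence from Theorem \ref{th4.2}(ii)--(iii), and the two-step relative-error bound of Lemma \ref{lem44}. Your expansion of the KL telescoping argument is simply the content of that cited abstract result, which (as you note) is formulated exactly for the two-step error bound, so no additional workaround is needed.
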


\begin{remark}
\rm
If we further assume that the function $\phi$ of $E$ in K\L ~property has the form $\phi(s)=cs^{1-\theta}, \theta\in [0,1)$, we can get the convergence rate of Algorithm 3.2 by a similar analysis with \cite{Wen,{OCTP}}.
\end{remark}

\section{ Numerical Experiments}

In order to show the  practicability and effectiveness of cDCA, we present some numerical experiments via implementing cDCA, ADCA and pDCA$_e$ for solving the problem \eqref{DC}. In cDCA, we take $x_0^{k+1}=x^{k}+\alpha_k(x^{k}-x^{k-1})+\beta_k(x^{k-1}-x^{k-2})$, $\forall k\in \mathbb{N}$.
\vskip 2mm

In our numerical tests, we focus on the following DC regularized least squares problem:
\begin{equation}
\label{lsp}
\min_{x\in \mathbb{R}^{n}}F(x)=\frac12\|Ax-b\|^2+g(x)-h(x),
\end{equation}
where $A\in \mathbb{R}^{m\times n}, b \in \mathbb{R}^m$, $g$ is a proper closed convex function and $h$ is a continuous convex function. Observe that $L_f = \lambda_{\max}(A^TA)$.

We consider two different classes of regularizers: the $\ell_{1-2}$ regularizer  and the logarithmic regularizer.
The ${\ell}_{1-2}$ regularized least squares problem is defined as:
\begin{equation}
\label{l12}
\min_{x\in \mathbb{R}^{n}}F_{\ell_{1-2}}(x)=\frac12\|Ax-b\|^2+\gamma\|x\|_1-\gamma\|x\|,
\end{equation}
where $\gamma > 0$ is the regularization parameter. This problem takes the form of \eqref{DC} with $g(x)=\gamma\|x\|_1$ and $h(x)=\gamma\|x\|$. We suppose in addition
that $A$ in (\ref{l12}) does not have zero columns. Using this
hypothesis, we see that $F_{\ell_{1-2}}$ is level-bounded, and that if we choose
$\gamma < \frac12\|A^Tb\|_\infty$, then assumptions in Theorem \ref{th45} are satisfied (see Subsection 5.1 in  \cite{Wen}).  We set $\gamma=0.01$ in \eqref{l12}.
The least squares problem with the logarithmic regularization function is defined as:
\begin{equation}\label{log}
\min_{x\in \mathbb{R}^{n}}F_{\log}(x)=\frac12\|Ax-b\|^2
+\sum_{i=1}^t\left[\gamma\log{(}|x_i|+\epsilon)-\gamma\log\epsilon\right],
\end{equation}
where  $\epsilon>0$ is a constant, and $\gamma > 0$ is the regularization parameter. Take $g(x)=\frac{\gamma}{\epsilon}\|x\|_1$ and $h(x)=\sum_{i=1}^t\gamma\left[\frac{|x_i|}{\epsilon}-\log(|x_i|+\epsilon)+\log\epsilon\right]$.
It is not hard to show that $F_{\log}$ is level-bounded and assumptions in Theorem \ref{th45} are satisfied (see Subsection 5.2 in  \cite{Wen}). We set $\gamma=0.01$ and $ \epsilon= 0.5$ in \eqref{log}.

We present numerical experiments for solving \eqref{lsp} on random instances generated as follows. We first generate
an $m\times n$ matrix $A$ with i.i.d. standard Gaussian entries, and then normalize this matrix
so that the columns of $A$ have unit norms. A subset $S$ of the size $K$ is then chosen uniformly
at random from $\{1, 2, 3, \ldots, n\}$ and a $K$-sparse vector $x^*$ having i.i.d. standard Gaussian
entries on $S$ is generated. Finally, we set $b = Ay +0.001\cdot{\hat n}$, where $\hat n \in \mathbb{R}^m$ is a random vector with i.i.d. standard Gaussian entries.

In the numerical
results listed in the following tables, `Iter' and `InIt' denote  the number of iterations of the outer loop and
the  number  of the additional iterations needed {when \eqref{eq:4.2a} is not satisfied}, i.e., InIt=$\sum_{k=1}^{\rm Iter}(m_k$ $-1)$, respectively and the sum of Iter and InIt is denoted by `tIter'.  

\subsection{Optimal choice of the parameter $\lambda$}

The parameter $\lambda$ is crucial to the computational speed of cDCA. Firstly, the outer loop is slow   when $\lambda$ is big, because  the stepsize $\mu$ becomes small  when $\lambda$ increases.  Secondly, the contraction coefficient of $T^{\lambda,\mu}_k$ in \eqref{eq:4.102a} is $\frac{L_f}{2\lambda+L_f}$, which depends on $\lambda$ and is small when $\lambda$ is big. So, the number of the inner loop is small when $\lambda$ is big. Therefore, the parameter $\lambda$ can't be very big or very small to achieve the fast convergence.

In the first example, we test cDCA with the parameter $\lambda$ varying among $\{ 0.001L_f,$ $0.01L_f,0.1L_f,0.5L_f\}$  for the $\ell_{1-2}$ regularizer and $\{0.07L_f,0.1L_f,0.2L_f\}$   for the logarithmic regularizer.
We take
${\rm tol}=10^{-5}$ for the $\ell_{1-2}$ regularizer with  $(m,n,K)=(120i,512i,20i),$ $i=1,4,7,10$. For the logarithmic regularizer, we take
${\rm tol}=10^{-4},5\times10^{-5},2\times10^{-5}$  for $(m,n,K)=(120i,512i,20i),$ $i=1$, $i=4$ and  $i=7,10$, respectively. The
starting point $x^0$ is randomly generated in $(0,1)^n$ and $x^{-1}=x^0+{\rm ones}(n,1)$, $x^{-2}=x^0$. Set $\delta=\frac{1.99\lambda}{L_f}$, $\mu=\frac{2}{2\lambda+L_f}$,  and $\alpha_k=\beta_k=0.6$, for all $ k\ge0.$
We report  Iter and InIt, averaged
over  30 random instances. In the tables, ``$\Max$" means that tIter hits 100000.

\begin{table}[!h]
\caption{Comparisons of cDCA with different $\lambda$ for solving (\ref{l12})}\label{table1}
{\scriptsize
\begin{center}
\begin{tabular}{c c c c c c c c c c c c c c c c c c c c c c c c c c}
\hline
$(m,n,K)$&&\multicolumn{2}{c}{(120,512,20)}&\multicolumn{4}{c}{(480,2048,80)}
&\multicolumn{2}{c}{(840,3584,140)}&\multicolumn{3}{c}{(1200,5120,200)}\\
\cline{1-1} \cline{3-4} \cline{6-7} \cline{9-10} \cline{12-13}
$\lambda$&&Iter&InIt&& Iter&InIt&&Iter&InIt&&Iter&InIt\\
\hline
\hline
0.001$L_f$&&169.2&97039.1&&554.3&89428.7&&$\Max$&$\Max$&&$\Max$&$\Max$\\
0.01$L_f$&&709.2& 8147&& 1252.9& 46104.7&& 1764.5&97732.3&&$\Max$&$\Max$\\
0.1$L_f$&&1916.6& 293.6&&4325.5& 346.7&&5955.4&383.9&& 6657.4& 360.7\\
0.5$L_f$&&88871.6&6.1&& $\Max$&$\Max$&& $\Max$&$\Max$&&$\Max$&$\Max$\\
\hline
\end{tabular}
\label{Tab1}
\end{center}
}
\end{table}

\begin{table}[!h]
\caption{Comparisons of cDCA with different $\lambda$ for solving \eqref{log}}\label{table2}
{\scriptsize
\begin{center}
\begin{tabular}{c c c c c c c c c c c c c c c c c c c c c c c c c c}
\hline
$(m,n,K)$&&\multicolumn{2}{c}{(120,512,20)}&\multicolumn{4}{c}{(480,2048,80)}
&\multicolumn{2}{c}{(840,3584,140)}&\multicolumn{3}{c}{(1200,5120,200)}\\
\cline{1-1} \cline{3-4} \cline{6-7} \cline{9-10} \cline{12-13}
$\lambda$&&Iter&InIt&& Iter&InIt&&Iter&InIt&&Iter&InIt\\
\hline
\hline
0.07$L_f$  &&1545.2&1283.8&&3173.2& 1819.9&&4997.4&2760.6&&6870.8&3711.4\\
0.1$L_f$   && 1811.4&781.1&&2726.4& 274.4&&3608.8&357.6&&4325.2&309.7\\
0.2$L_f$      &&12145.3&38.1&&17676.3&40.5&&$\Max$&$\Max$&&$\Max$&$\Max$\\
\hline
\end{tabular}
\label{Tab2}
\end{center}
}
\end{table}

As seen in Tables \ref{Tab1} and \ref{Tab2}, the optimal choice of the parameter $\lambda$ is $0.1L_f$ for the $\ell_{1-2}$ regularizer and   the logarithmic regularizer, for which tIter is the smallest. It is also observed  that InIt decreases and Iter increases when  $\lambda$ becomes big, which is consistent with the foregoing analysis.

\subsection{Comparison with ADCA and pDCA$_e$ }

In the second experiment, we compare cDCA with ADCA and  pDCA$_e$  studied in various work such as \cite{Wen,{PhanLe}}  for solving the  regularized least squares problem \eqref{lsp} with the $\ell_{1-2}$ regularizer and the logarithmic regularizer.  cDCA,  ADCA and pDCA$_e$ are all proximal-gradient type algorithms, which involve proximal operators and gradient operators. We terminate ADCA and pDCA$_e$  when
\begin{equation}
\label{terminate-Cond2}
\frac{\|x^k-x^{k-1}\|}{\max\{1,\|x^{k-1}\|\}}<{\rm tol}
\end{equation}
where ${\rm tol}$ is a small enough constant and will be given in the implement. 
 We discuss the implementation details of  three algorithms
below.

\textit{cDCA}\,\, For this algorithm, we take $L_f = \lambda_{\max}(A^TA)$, $\lambda=0.1L_f$, $\delta=\frac{1.99\lambda}{L_f}$, $\mu=\frac{2}{2\lambda+L_f}$ and $\alpha_k=\beta_k=0.6$, for all $ k\ge0.$ The
starting point $x^0$ is  randomly generated in $(0,1)^n$ and $x^{-1}=x^0+{\rm ones}(n,1)$, $x^{-2}=x^0$. We take 
${\rm tol} = 10^{-6}$ for the $\ell_{1-2}$ regularizer, and ${\rm tol}  = 6.5\times10^{-5}$  for $(m,n,K)=(120i,512i,20i),$ $i=1,2,3$  and ${\rm tol} = 2\times10^{-5}$  for other problem sizes of the logarithmic regularizer.}

\textit{ADCA}\,\, This algorithm was proposed in \cite{PhanLe}, and its scheme is given by \eqref{ADCA}.
 We take $q=5$, $L_f = \lambda_{\max}(A^TA)$, $\rho=1.1L_f$, and the
starting point $x^0$ is  randomly generated in $(0,1)^n$. We terminate it when \eqref{terminate-Cond2} holds  with $\varepsilon = 10^{-6}$ for the $\ell_{1-2}$ regularizer, \eqref{terminate-Cond2} holds with $\varepsilon = 6.5\times10^{-5}$  for $(m,n,K)=(120i,512i,20i),$ $i=1,2,3$ and $\varepsilon = 2\times10^{-5}$ for other problem sizes of the logarithmic regularizer.

\textit{pDCA$_e$}\,\, This algorithm was proposed in \cite{Wen}, and its scheme is given by \eqref{pDCAe}.
 We take $L_f = \lambda_{\max}(A^TA)$, choose the extrapolation
parameters $\{\beta_k\}_{k=0}^\infty$ and perform  the
adaptive restart strategies as in \cite{Wen}.  The
starting point $x^0$ is  randomly generated in $(0,1)^n$ and $x^{-1}=x^0$.  We terminate it when \eqref{terminate-Cond2} holds with $\varepsilon = 10^{-6}$  for the $\ell_{1-2}$ regularizer, \eqref{terminate-Cond2} holds with $\varepsilon = 1.5\times10^{-5}$ for $(m,n,K)=(120i,512i,20i),$ $i=1,2,3$ and $\varepsilon = 4\times10^{-6}$  for other problem sizes of the logarithmic regularizer.

In our numerical experiments below, we consider $(m,n,K) = (120i, 512i,20i)$
for $i = 1, 2,\ldots,10$. For each triple $(m,n, K)$, we generate 30 instances randomly as described above.   We report  tIter, Iter, CPU
times in seconds (CPU time), and the function values at termination (fval), averaged
over  30 random instances. Note that pDCA$_e$ and ADCA don't include an inner loop. Therefore, we compare  tIter  of cDCA and Iter of pDCA$_e$ and ADCA.
\begin{table}[h]
\label{table}
\footnotesize
\renewcommand\arraystretch{1.5}
\setlength\tabcolsep{2.2pt}
\centering
\caption{Computational results for solving (\ref{l12}) with cDCA, ADCA and pDCA$_e$}
\vskip 2mm
\begin{tabular}{cccccccccccccccccccc}
\hline
\multicolumn{3}{c}{Problem size}  & \multicolumn{2}{c}{cDCA} & \multicolumn{1}{c}{ADCA} &  \multicolumn{1}{c}{pDCA$_e$}&
\multicolumn{4}{c}{CPU time}  &\multicolumn{4}{c}{fval}\\
 \cline{1-3} \cline{5-7} \cline{9-11}\cline{13-15}
$m$ & $n$ & $K$ && tIter  & Iter &  Iter &&  cDCA&ADCA& pDCA$_e$  &&  cDCA&ADCA&  pDCA$_e$    \\
\hline
 \hline
120&512&20&&  2832.2&2818.2&5187.5 &&  1.846&2.304&3.675 && 0.152&0.152&0.152\\
240&1024&40&& 4006.8&4026.1&7373.9 && 4.434&5.694&8.384  &&  0.328&0.328&0.328\\
360&1536&60&& 4939&4955&9028.1 && 8.298&11.37&15.42  &&  0.510&0.510&0.510\\
480&2048&80&& 5332.1&5647.7&10418 && 12.87&19.41&25.62  &&  0.683&0.683&0.683\\
600&2560&100&& 6215.6&6374.8&11759.1 && 33.83&55.61&64.44  &&  0.893&0.893&0.893\\
720&3072&120&& 6771.2&7238.7&13039.1 && 99.37&192.0&194.9  &&  1.066&1.066&1.066\\
840&3584&140&& 7157.6&7757.8&14264.6 && 156.6&323.9&313.9  && 1.276&1.276&1.276\\
960&4096&160&& 7693.7&8559.8&15374.1 && 218.7&483.2&438.6 && 1.461&1.462&1.461\\
1080&4608&180&& 8151.6&9206.3&16460.9 && 299.7&687.9&607.5  && 1.651&1.651&1.651\\
1200&5120&200&& 8483.3&9862.7&17535.4 && 380.5&912.1&787.4 && 1.820&1.821&1.820\\
 \hline
\hline
\end{tabular}\label{Tab3}
\end{table}

\begin{table}[h]
\label{table}
\footnotesize
\renewcommand\arraystretch{1.5}
\setlength\tabcolsep{2.2pt}
\centering
\caption{Computational results for solving (\ref{log}) with cDCA, ADCA and pDCA$_e$}
\vskip 2mm
\begin{tabular}{cccccccccccccccccccc}
\hline
\multicolumn{3}{c}{Problem size}  & \multicolumn{2}{c}{cDCA} & \multicolumn{1}{c}{ADCA} &  \multicolumn{1}{c}{pDCA$_e$}&
\multicolumn{4}{c}{CPU time}  &\multicolumn{4}{c}{fval}\\
 \cline{1-3} \cline{5-7} \cline{9-11}\cline{13-15}
$m$ & $n$ & $K$ && tIter  & Iter &  Iter &&  cDCA&ADCA& pDCA$_e$  &&  cDCA&ADCA&  pDCA$_e$    \\
\hline
 \hline
120&512&20 && 2037.3&2044.3&3086.2 && 2.145&2.714&3.780 && 0.155&0.155&0.155\\
240&1024&40 && 2188.1&2929.5&4710 && 3.826&6.232&8.755 && 0.337&0.337&0.337\\
360&1536&60 && 2556.3&3370.7&5975.5 && 6.747&11.23&16.66 && 0.516&0.528&0.511\\
480&2048&80 && 3436.0&4288.7&7323.5 && 11.96&20.46&27.17 && 0.696&0.696&0.696\\
600&2560&100 && 3527.6&4760.4&8389.9 && 23.99&49.58&59.29 && 0.890&0.890&0.890\\
720&3072&120 && 3760.7&5210.3&9336.7 && 53.25&138.0&134.5 && 1.079&1.079&1.079\\
840&3584&140 && 3973.5&5609.3&10307.1 && 83.98&228.9&220.3 && 1.280&1.280&1.280\\
960&4096&160 && 4207.0&6048.1&11213.5 && 118.2&338.0&318.9 && 1.461&1.461&1.460\\
1080&4608&180 && 4407.1&6334.7&12081.1 && 160.5&465.9&443.4 && 1.656&1.656&1.655\\
1200&5120&200 && 4601.0&6873.5&12859.1 && 199.7&604.3&561.4 && 1.827&1.831&1.826\\
 \hline
\hline
\end{tabular}\label{Tab4}
\end{table}

We can see from Tables \ref{Tab3} and \ref{Tab4} that cDCA always outperforms ADCA and pDCA$_e$ in terms of iterations and CPU time for the $\ell_{1-2}$ regularizer and the logarithmic regularizer. The function values  at termination  of cDCA are almost the same as ADCA and pDCA$_e$ for two regularizers.

To illustrate the ability of recovering the original sparse solution by cDCA,
we plot in Figure \ref{fig1}  the true solution and the solutions obtained on
a random instance $(m, n, K)$ = (600,2560,100). The true solution  is represented
by asterisks, while circles are the estimates obtained by  cDCA. We see that the
estimates obtained by cDCA are quite close to the true values. For the same problem size as Figure 1, Figure \ref{fig2} shows the decay of the objective values with CPU time of the three algorithms.

\begin{figure}[!h]
\begin{center}
\includegraphics[width=7.4cm,height=5.5cm]{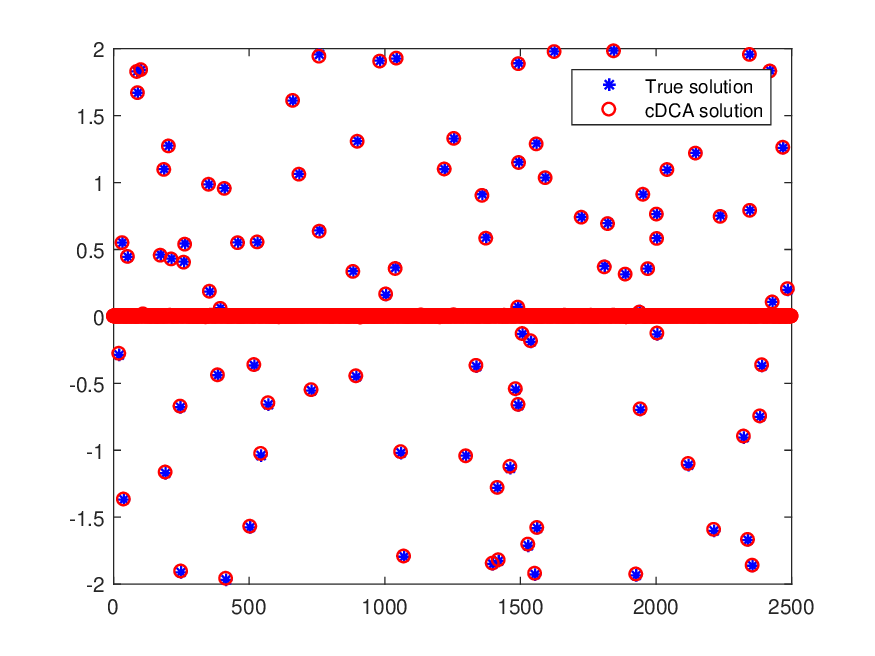}
\includegraphics[width=7.4cm,height=5.5cm]{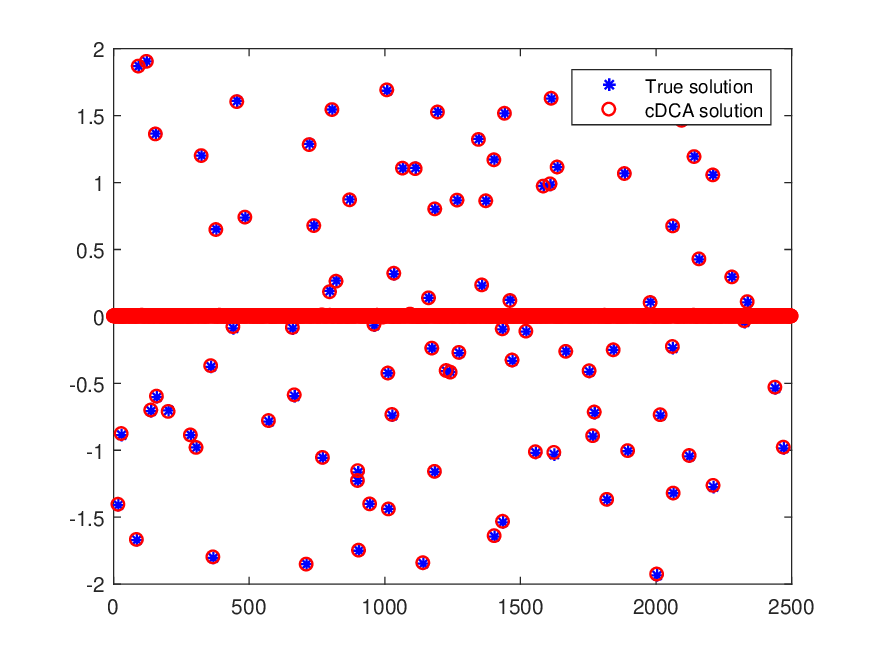}
\caption{The true solution and the solution by  cDCA  for $\ell_{1-2}$ regularizer (left) and logarithmic regularizer (right).  }\label{fig1}
\end{center}
\end{figure}

\begin{figure}[!h]
\begin{center}
\includegraphics[width=7.4cm,height=5.5cm]{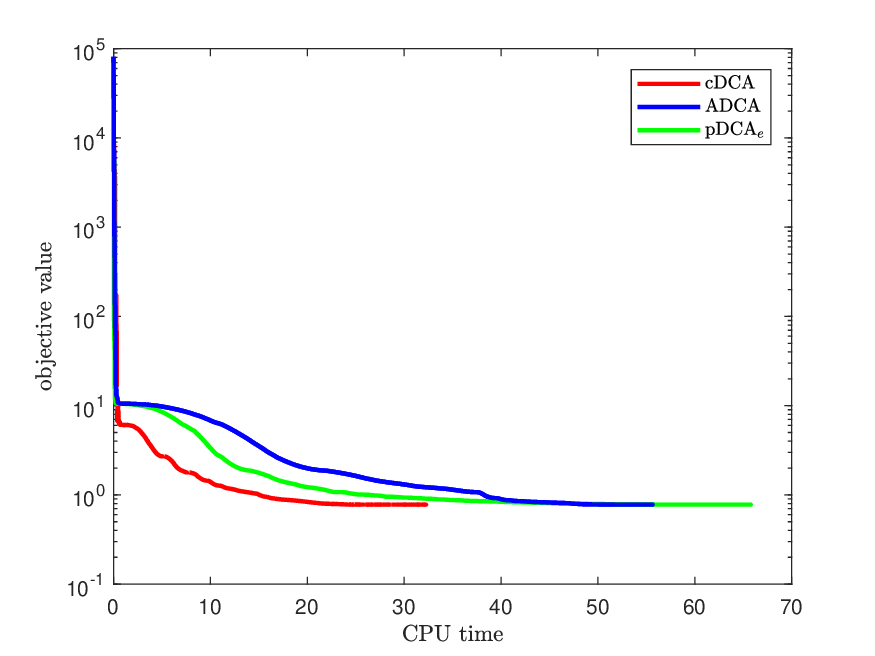}
\includegraphics[width=7.4cm,height=5.5cm]{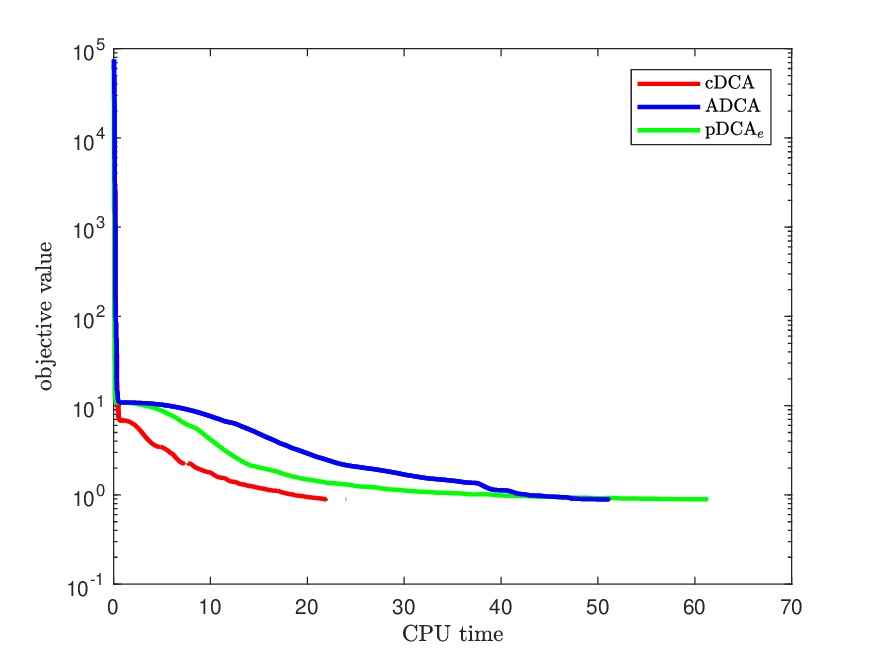}
\caption{The decay of objective values of cDCA, ADCA and pDCA$_e$ with CPU time  for $\ell_{1-2}$ regularizer (left) and logarithmic regularizer (right).  }\label{fig2}
\end{center}
\end{figure}

{Note that we don't compare cDCA with pDCA$_{e}$ since pDCA$_{e}$ is an extrapolation modification of pDCA while there is no extrapolation in cDCA.}

\section{Concluding Remarks}

In this paper, to better solve the subproblem of  DCA, we introduce a contractive difference-of-convex algorithms (cDCA) where an adaptive termination rule is given. The global subsequential convergence and the convergence of the whole iterative sequence generated by cDCA are established under appropriate conditions.

One interesting topic is how to further accelerate the cDCA through other ways, such as incorporating the extrapolation techniques with the inner loop, which deserves further research. It is also worth investigating to improve cDCA to solve more general DC problem.

\section*{Acknowledgements}

We were deeply grateful to the two anonymous referees for their constructive
suggestions and critical comments on the manuscript, which helped us signiﬁcantly
improve the presentation of this paper. We are grateful to Miss Ziyue Hu for valuable
help in numerical experiments.

This work was supported by
 Open Fund of Tianjin Key Lab for Advanced Signal Processing (No. 2022ASP-TJ01).
Dong was supported in part by National Natural Science Foundation of China (No. 12271273).
\vskip 2mm


\begin{thebibliography}{00}

\bibitem{Alvarado}
Alvarado, A., Scutari, G., Pang, J.S.: A new decomposition method for multiuser DC-programming
and its applications. \textit{IEEE Trans. Signal Process.} \textbf{62}, 2984--2998 (2014)


\bibitem{AT}
An, L.T.H., Tao, P.D.: The DC (difference of convex functions) programming and DCA revisited with DC models of real world nonconvex optimization problems. \textit{Ann. Oper. Res.} \textbf{133}, 23--46 (2005)


\bibitem{AV}
Arag\'on Artacho, F.J., Vuong, P.T.: The boosted difference of convex functions algorithm
for nonsmooth functions, \textit{SIAM J. Optim.} \textbf{30}, 980--1006 (2020)

\bibitem{Banert-Bot}
Banert, S., Bot, R.I.:
A general double-proximal gradient algorithm for d.c.
programming. \textit{Math. Program.}  \textbf{178}, 301--326 (2019)


\bibitem{BC2011} Bauschke, H.H.,  Combettes, P.L.: \textit{Convex Analysis and Monotone Operator Theory in Hilbert Spaces}, 2nd ed. Springer, New York, 2017.

\bibitem{BT}
Beck, A., Teboulle, M.: A fast iterative shrinkage-thresholding algorithm for linear inverse problems.
\textit{SIAM J. Imaging Sci.} \textbf{2}, 183--202 (2009)

\bibitem{Gotoh}
Gotoh, J., Takeda, A., Tono, K.: DC formulations and algorithms for sparse optimization problems.
\textit{Math. Program.} \textbf{169}, 141--176 (2018)

\bibitem{dO}
de Oliveira, W.: The ABC of DC Programming, \textit{Set-Valued Var. Anal.} \textbf{28}, 679--706 (2020).

\bibitem{Dong}
Dong, Q.L., Huang, J.Z., Li, X.H. et al.: MiKM: multi-step inertial Krasnosel'ski\v{\i}--Mann algorithm and its applications. \textit{J. Glob. Optim.} \textbf{73}, 801--824 (2019)



\bibitem{LeLe}
Le Thi, H.A., Le, H.M., Phan, D.N., Tran, B.:
Stochastic DCA for minimizing a large sum of DC functions with application to multi-class logistic regression, \textit{Neural Networks}, \textbf{132},  220--231 (2020)


\bibitem{LeThi}
 Le Thi, H.A., Nguyen, V.V., Ouchani, S.:
Gene selection for cancer classification using DCA.  \textit{4th International Conference on Advanced Data Mining and Applications.} \textbf{5139},  62--72 (2008)

\bibitem{Le-Pham}
Le Thi, H.A., Pham Dinh, T.: Open issues and recent advances in DC programming and DCA. \textit{J. Glob. Optim.} (2023). https://doi.org/10.1007/s10898-023-01272-1.

\bibitem{Pham-Le2}
 Le Thi, H.A., Pham, D.T.: DC programming and DCA: thirty years of
developments. \textit{Math. Program.} \textbf{169}, 5--68  (2018)

\bibitem{LL}
Lin, D., Liu, C.: The modified second APG method for DC optimization problems. \textit{Optim. Lett.} \textbf{13}, 805--824 (2019)

\bibitem{Liu-Pong-Takeda}
Liu, T., Pong, T.K., Takeda, A.:
A refined convergence analysis of pDCAe with applications
to simultaneous sparse recovery and outlier detection. \textit{Comput. Optim. Appl.}  \textbf{73}, 69--100 (2019)

\bibitem{Liu-Pong-Takeda1}
Liu, T., Pong, T.K., Takeda, A.: A successive difference-of-convex approximation method for a class
of nonconvex nonsmooth optimization problems. \textit{Math. Program.}  \textbf{176}, 339--367   (2019)

\bibitem{Lou-Yan}
Lou, Y., Yan, M.: Fast L1-L2 Minimization via a proximal operator. \textit{J. Sci. Comput.} \textbf{74}, 767--785 (2018)

\bibitem{Lu-Zhou}
Lu, Z., Zhou, Z.: Nonmonotone enhanced proximal DC algorithms for a class of structured nonsmooth
DC programming. \textit{SIAM J. Optim.} \textbf{29(4)}, 2725--2752 (2019)

\bibitem{Lu-Zhou-Sun}
Lu, Z., Zhou, Z., Sun, Z.: Enhanced proximal DC algorithms with extrapolation for a class of structured nonsmooth DC minimization. \textit{Math. Program.}   \textbf{176}, 369--401 (2019)

\bibitem{Nesterov}
Nesterov, Y.E.: A method for solving the convex programming problem with convergence rate $O(1/k^2)$,
\textit{Dokl. Akad. Nauk SSSR}, \textbf{269}, 543--547 (1983) (in Russian).

\bibitem{Oblomskaja}
Oblomskaja, L.: Methods of successive approximation for linear equations in Banach spaces,
\textit{USSR Compt. Math. and Math. Phys.} \textbf{8}, 239--253 (1968)

\bibitem{OCTP}
Ochs, P.,  Chen, Y.,  Brox, T.,  Pock, T.: iPiano: Inertial proximal algorithm for nonconvex optimization, \textit{SIAM J. Imaging Sci.} \textbf{7(2)}, 1388--1419 (2014)

\bibitem{Pang-Razaviyayn-Alvarado}
Pang, J.S., Razaviyayn, M., Alvarado, A.: Computing B-stationary points of nonsmooth DC programs.
\textit{Math. Oper. Res.} \textbf{42(1)}, 95–118 (2017)


\bibitem{Pham}
Pham, T.N., Dao, M.N., Amjady, N., Shah, R.: A proximal splitting algorithm for generalized DC programming with applications in signal recovery, 2024 https://arxiv.org/abs/2409.01535


\bibitem{Pham-Le}
Pham, D.T., Le Thi, H.A.: Convex analysis approach to DC programming: theory, algorithms and
applications. \textit{Acta Math. Vietnam.} \textbf{22}, 289--355 (1997)

\bibitem{Pham-Le1}
Pham, D.T., Le Thi, H.A.: A D.C. optimization algorithm for solving the trust-region subproblem.
\textit{SIAM J. Optim.} \textbf{8}, 476--505 (1998)

\bibitem{Pham-Souad}
Pham Dinh, T., Souad, E.B.: \textit{Algorithms for solving a class of nonconvex optimizations problems:
methods of subgradient}. Fermat Day 85: Mathematics for Optimization, North Holland 1986


\bibitem{PhanLe}
Phan, D.N., Le, H.M., Le Thi, H.A.: Accelerated difference of convex functions algorithm and its application
to sparse binary logistic regression. In: Proceedings of the 27th International Joint Conference on
Artificial Intelligence, IJCAI-18, pp. 1369–1375. International Joint Conferences on Artificial Intelligence Organization (2018)


\bibitem{CL}
Poon, C..  Liang, J.: Geometry of first-order methods and adaptive acceleration, arXiv:2003 .03910.


\bibitem{Rock}
Rockafellar, R.T.: \textit{Convex Analysis}, Princeton University Press, Princeton, 1970

\bibitem{Sun}
Sun, W.Y., Sampaio, R.J.B., Candido, M.A.B.: Proximal point algorithm for
minimization of DC Functions. \textit{J. Comput. Math.}
\textbf{21(4)}, 451--462 (2003)


\bibitem{Syrtseva}
Syrtseva, K.,  de Oliveira, W.,  Demassey, S.,  van Ackooij, W.:  Minimizing the difference of convex and weakly convex functions via bundle method. \textit{Pac. J. Optim.} \textbf{20(4)}, 499-741, (2024)


\bibitem{Tuy}
Tuy, H.: \textit{Convex Analysis and Global Optimization,} 2nd edn. Springer, Berlin 2016

\bibitem{Wen}
Wen, B., Chen, X., Pong, T.K.: A proximal difference-of-conex algorithm with extrapolation. \textit{Comput.
Optim. Appl.} \textbf{69}, 297–324 (2018)

\bibitem{XuHK}
Xu, H.K.: Averaged mappings and the gradient-projection algorithm. \textit{J. Optim. Theory Appl.} \textbf{150}, 360--378 (2011)

\bibitem{Ye}
Ye, J.J., Yuan, X., Zeng, S., Zhang, J.: Difference of convex algorithms for bilevel programs with applications in hyperparameter selection. \textit{Math. Program.} \textbf{198}, 1583--1616 (2023)

\bibitem{Yin-Lou-He-Xin}
Yin, P., Lou, Y., He, Q., Xin, J.: Minimization of $\ell_{1-2}$ for compressed sensing. \textit{SIAM J. Sci. Comput.} \textbf{37(1)}, A536--A563 (2015)


\bibitem{Yu-Pong-Lu2021}
Yu, P., Pong, T.K., Lu, Z.: Convergence rate analysis of a sequential convex programming method with line search for a class of constrained difference-of-convex optimization problems, \textit{SIAM J. Optim.} \textbf{31}, 2024--2054 (2021)



\end{thebibliography}
\end{document}